\def\red{}
\def\rred{}
\def\cR{\mathcal{R}}
\newcommand{\diam}{\frac{ \log{n}}{\log{\log{n}}}}
\newcommand{\whp}{\textit{whp}}
\newcommand{\beql}[1]{\begin{equation}\label{#1}}
\newcommand{\eeq}{\end{equation}}
\newcommand{\Prob}[1]{{{\bf{Pr}}\left[{#1}\right]}}
\newtheorem{lemma}{Lemma}
\newtheorem{theorem}{Theorem}
\newtheorem{remark}{Remark}
\newcommand{\hide}[1]{}
\newcommand{\field}[1]{\mathbb{#1}} 
\def\hT{\widehat{T}}
\def\hD{\widehat{D}}
\def\hC{\widehat{Q}}
\def\cC{\mathcal{C}}
\newcommand{\beq}[1]{\begin{equation}\label{#1}}
\newcounter{rot}
\def\a{\alpha}   \def\D{\Delta}
\def\e{\epsilon} \def\f{\phi}   \def\g{\gamma}
\def\G{\Gamma}  \def\k{\kappa}
 \def\th{\theta}    
\def\r{\rho}  \def\s{\sigma} 
\def\t{\tau} \def\om{\omega}
\newtheorem{corollary}[theorem]{Corollary}
\newcommand{\proofend}{\hspace*{\fill}\mbox{$\Box$}}
\newcommand{\rdup}[1]{{\left\lceil #1 \right\rceil }}
\newcommand{\rdown}[1]{{\lfloor #1 \rfloor}}
\newcommand{\brac}[1]{\left(#1\right)}
\newcommand{\bfrac}[2]{\left(\frac{#1}{#2}\right)}
\newcommand{\set}[1]{\left\{#1\right\}}
\def\Pr{\mbox{{\bf Pr}}}
\newcommand{\ignore}[1]{}
\def\hP{\widehat{P}}
\def\depth{\text{depth}}
\def\Bin{\text{Bin}}
\title{Rainbow Connection of Sparse Random Graphs}
\author{Alan Frieze}
\thanks{Alan Frieze's Research is supported in part by NSF Grant ccf1013110}
\author{Charalampos E. Tsourakakis}
\thanks{Charalampos E. Tsourakakis's Research is supported in part by NSF Grant ccf1013110}
\address{Department of Mathematical Sciences\\
Carnegie Mellon University\\
5000 Forbes Av., 15213\\
Pittsburgh, PA \\
U.S.A}
\email{alan@random.math.cmu.edu,ctsourak@math.cmu.edu}
\begin{document}

\begin{abstract} 
An edge colored graph $G$ is rainbow edge connected if any two vertices are connected
by a path whose edges have distinct colors. The rainbow connectivity of a connected
graph $G$, denoted by $rc(G)$, is the smallest number of colors that are needed in
order to make $G$ rainbow connected. 

In this work we study the rainbow connectivity of binomial random graphs
at the connectivity threshold $p=\frac{\log n+\om}{n}$ where $\om=\om(n)\to\infty$ and ${\om}=o(\log{n})$
and of random $r$-regular graphs where $r \geq 3$ is a fixed integer.
Specifically, we prove that the rainbow connectivity $rc(G)$ of $G=G(n,p)$ satisfies 
$rc(G) \sim \max\set{Z_1,diameter(G)}$
with high probability (\whp). Here $Z_1$ is the number of vertices in $G$ whose degree equals 1 and the 
diameter of $G$
is asymptotically equal to $\diam$ \whp.
Finally, we prove that the rainbow connectivity $rc(G)$ of the random $r$-regular graph $G=G(n,r)$ \whp\ satisfies 
$rc(G) =O(\log^{\th_r}{n})$ where $\th_r=\frac{\log (r-1)}{\log (r-2)}$ when $r\geq 4$ and $rc(G) =O(\log^4n)$ \whp\ when 
$r=3$.  

\end{abstract} 

\maketitle

\section{Introduction} 

Connectivity is a fundamental graph theoretic property. Recently, the  
concept of {\em rainbow connectivity} was introduced by Chartrand et al. in \cite{chartrand}. 
An edge colored graph $G$ is rainbow edge connected if any two vertices are connected by a 
path whose edges have distinct colors. The rainbow connectivity $rc(G)$ of a connected graph 
$G$ is the smallest number of colors that are needed in order to make $G$ rainbow edge connected. 
Notice, that by definition a rainbow edge connected graph is also connected and furthermore
any connected graph has a trivial edge coloring that makes it rainbow edge connected, since 
one may color the edges of a given spanning tree with distinct colors. 
Other basic facts established in \cite{chartrand} are that $rc(G)=1$ if and only if $G$ is a 
clique and $rc(G)=|V(G)|-1$ if and only if $G$ is a tree. 
Besides its theoretical interest, rainbow connectivity is also of interest in applied settings, such 
as securing sensitive information \cite{lisun}, transfer and networking \cite{chakraborty}.

The concept of  rainbow connectivity has attracted the interest of various researchers.
Chartrand et al. \cite{chartrand} determine the rainbow connectivity of several special classes
of graphs, including multipartite graphs. Caro et al. \cite{caro} prove that for a connected 
graph $G$ with $n$ vertices and minimum degree $\delta$, the rainbow connectivity satisfies
$rc(G)\leq \frac{\log{\delta}}{\delta}n(1+f(\delta))$, where $f(\delta)$ tends to zero 
as $\delta$ increases. The following simpler bound was also proved in \cite{caro},
$rc(G) \leq n \frac{4\log{n}+3}{\delta}$. 
{\red Krivelevich and Yuster \cite{krivelevichyuster} removed the logarithmic factor
from the Caro et al. \cite{caro} upper bound. Specifically they proved that
$rc(G) \leq \frac{20n}{\delta}$. 
Due to a construction of a graph
with minimum degree $\delta$ and diameter $\frac{3n}{\delta+1}-\frac{\delta+7}{\delta+1}$ by 
Caro et al. \cite{caro}, the best upper bound one can hope for is $rc(G) \leq \frac{3n}{\delta}$.
Chandran, Das, Rajendraprasad and Varma \cite{Chandran} have subsequently
proved an upper bound of $\frac{3n}{\delta+1}+3$, which is therefore essentially optimal.}

As Caro et al. point out, the random graph setting poses several intriguing questions. 
Specifically, let $G=G(n,p)$ denote the binomial random graph on $n$ 
vertices with edge probability $p$ \cite{erdosrenyi}. 
Caro et al. \cite{caro} proved that $p=\sqrt{\log{n}/n}$ is the sharp threshold for 
the property $rc(G(n,p))\leq 2$. 
He and Liang \cite{heliang} studied further the rainbow connectivity of random graphs. 
Specifically, they obtain the sharp threshold for the property $rc(G) \leq d$
where $d$ is constant. For further results and references we refer 
the interested reader to the recent survey of Li and Sun \cite{lisun}. 
In this work we look at the rainbow connectivity of the binomial graph
at the connectivity threshold  $p=\frac{\log{n}+{\om}}{n}$ where ${\om}=o(\log{n})$. 
This range of values for $p$ poses problems that cannot be tackled 
with the techniques developed in the aforementioned work. 
Rainbow connectivity has not been studied in random regular graphs to the best of our knowledge. 

Let 
\beq{Ldef}
L=\diam
\eeq
and let $A\sim B$ denote $A=(1+o(1))B$ as $n\to\infty$.

\noindent We establish the following theorems:
\begin{theorem}
\label{thrm:mainthrm}
Let $G = G(n,p),p=\frac{\log{n}+{\om}}{n}$, $\om\to\infty,{\om}=o(\log{n})$. Also, let $Z_1$ be the number of 
vertices of 
degree 1 in $G$.
Then, with high probability({\it whp})\footnote{An event $A_n$ holds with high probability ({\it whp}) 
if $\lim_{n \rightarrow +\infty} \Prob{A_n}=1$.}
$$  rc(G) \sim \max\set{Z_1, L},$$
\end{theorem}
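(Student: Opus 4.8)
The plan is to prove matching lower and upper bounds on $rc(G)$, both asymptotically equal to $M:=\max\{Z_1,L\}$.

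\textbf{Lower bound.} This is the easy direction. First, $rc(G)\ge \mathrm{diam}(G)$ trivially, since any rainbow path between two vertices at distance $d$ has at least $d$ edges and hence uses at least $d$ colors; combined with the standard fact that $\mathrm{diam}(G(n,p))\sim L$ \whp\ at this edge density, this gives $rc(G)\gtrsim L$ \whp. Second, $rc(G)\ge Z_1$: if $u,v$ are two distinct vertices of degree $1$, the unique edge at $u$ and the unique edge at $v$ must appear on \emph{every} path between them, so in any rainbow coloring these two pendant edges receive distinct colors; more generally all $Z_1$ pendant edges must get pairwise distinct colors (take any pair and the path joining their endpoints passes through both pendant edges). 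Hence $rc(G)\ge Z_1$ deterministically, and therefore $rc(G)\ge M$ \whp. A small point to check: at $p=(\log n+\om)/n$ with $\om=o(\log n)$ we have $Z_1\to\infty$ \whp\ (its expectation is $n(1-p)^{n-1}np\sim \om \, e^{-\om}\cdot(\dots)$ — in fact one should verify $Z_1$ is typically of order $n e^{-\om}\log n /n=e^{-\om}\log n\cdot\Theta(1)$; either way $Z_1$ and $L$ are the two competing quantities and the max of them is what matters).

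\textbf{Upper bound.} The real work is showing $rc(G)\le (1+o(1))M$ \whp. The natural strategy: expose $G=G(n,p)$ and work with its $2$-core together with the pendant trees hanging off it. Pendant edges (and short pendant paths) are forced to use essentially private colors, which accounts for the $Z_1$ term; the claim is that $O(L)$ additional colors suffice to rainbow-connect everything else. Concretely I would (i) show that \whp\ every vertex is within distance $o(L)$ — in fact $O(\log\log n)$ or so — of the $2$-core, and that the ``attachment structure'' (pendant trees) is simple (small components, bounded depth except for a controlled number of longer paths); (ii) build an explicit coloring: reserve a palette of $(1+o(1))Z_1$ colors to color the pendant/low-degree peripheral edges so that within the union of all peripheral trees every vertex-to-core path is rainbow, and reuse this palette freely in the dense interior; (iii) on the $2$-core, use $(1+o(1))L$ fresh colors (or reuse) to guarantee that between any two core vertices there is a rainbow path of length $\sim L$. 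Step (iii) is where I expect to lean on a BFS/expansion argument: from any vertex, neighborhoods grow by roughly a factor $np=\log n$ per step, so after $k\sim \log n/\log\log n = L$ steps we reach everything; one colors the edges level-by-level (or along a spanning structure) so that a rainbow path of length $\le L$ exists to every target, handling the two endpoints' peripheral tails with the reserved palette. One must be careful that the peripheral colors and the interior colors do not clash \emph{along a single path}: the key observation is that a shortest path uses at most one peripheral tail at each end and a rainbow interior segment, and there are enough colors to keep these disjoint.

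\textbf{Main obstacle.} The crux is the constructive coloring of the $2$-core (and the core-plus-short-trees bulk) with only $(1+o(1))L$ colors while simultaneously certifying a rainbow path of length $\le (1+o(1))L$ between \emph{every} pair of vertices. A clean way is a two-round exposure / sprinkling argument: split $p=p_1+p_2$, use $G(n,p_1)$ to get a well-connected backbone of diameter $\sim L$ with a good coloring, then use the extra $G(n,p_2)$ edges only to attach the few stragglers with a handful of extra colors. Controlling the interaction between the $Z_1$ forced colors and the $L$ structural colors — ensuring no path is forced to exceed $(1+o(1))M$ distinct colors — is the delicate bookkeeping step, but since $Z_1$ and $L$ govern disjoint ``regions'' of the graph (pendant periphery versus the expanding core) and $M=\max\{Z_1,L\}$ dominates each, the palettes can be overlapped wherever they are never used together, keeping the total at $(1+o(1))M$. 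I would also isolate, as a separate lemma, the statement that the number of vertices at distance $\ge 2$ from the $2$-core, and the number of ``long'' pendant paths, are both $o(M)$ \whp, since that is what lets the short-tail palette be absorbed into the $L$-sized structural palette.
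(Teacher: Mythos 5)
Your lower bound is exactly the paper's argument (pendant edges must receive pairwise distinct colors, plus $rc(G)\ge \mathrm{diam}(G)\sim L$), and that part is fine; the aside that $Z_1\to\infty$ \whp\ is neither needed nor always true (for $\om$ around $\log\log n$ one has $\E Z_1=\Theta(e^{-\om}\log n)$, which can be bounded), but since $L\to\infty$ the asymptotics of $\max\{Z_1,L\}$ are unaffected. The genuine gap is in the upper bound, precisely at the step you yourself flag as ``the crux'': you assert that $(1+o(1))L$ colors can be placed on the $2$-core so that \emph{every} pair of vertices gets a rainbow path of length $\sim L$, but you give no mechanism that could certify this. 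Coloring ``level-by-level'' along a BFS or a spanning structure is tied to one root and cannot simultaneously control all $\binom{n}{2}$ pairs, and the sprinkling idea only adds edges, not a reason why a path whose length is within a factor $1+o(1)$ of the number of available colors should be rainbow. The quantitative difficulty is that with $q=(1+O(\e))L$ colors, a single path of length $(1+O(\e))L$ is rainbow under a random coloring only with probability about $\e^{O(L)}=n^{-o(1)}$, so no constant or polylogarithmic number of trials per pair suffices; one needs polynomially many independent trials, and your outline contains nothing that produces them.

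The paper's route supplies exactly this missing ingredient: color the edges uniformly at random with $q=(1+5\e)L$ colors, and show (Lemmas \ref{lem2}--\ref{lem4}) that for every pair of large vertices $x,y$ one can grow isomorphic depth-$\e L$ trees $T_x,T_y$ with branching $\log n/101$, then from each leaf a vertex-disjoint tree of depth $(\tfrac12+\e)L$ with $n^{1/2+\Theta(\e)}$ leaves, so that corresponding leaf-trees are joined by an edge; this yields $n^{4\e/5}$ \emph{edge-disjoint} candidate $x$--$y$ paths of length $(1+O(\e))L$. A separate ``living pairs'' argument (Lemma \ref{lem5}, Step 1) shows that for polynomially many matched leaf pairs the two end segments are rainbow and color-disjoint, and then the $n^{4\e/5}$ edge-disjoint middle segments give, by independence, at least one fully rainbow path with probability $1-o(n^{-2})$, allowing a union bound over all pairs; degree-$1$ vertices get dedicated colors (accounting for the $Z_1$ term, with the random-coloring analysis rerun on $q-1$ or $q-2$ colors), and the remaining small vertices are handled with two extra colors. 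Without an analogue of the edge-disjoint-paths construction and the end-segment decoupling, your step (iii) does not go through, so the proposal as written does not prove the upper bound.
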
 
It is known that \whp\ the diameter of $G(n,p)$ is asymptotic to $L$ for $p$ as in the above range,
see for example Theorem 10.17 of Bollob\'as \cite{bollobas}. 
Theorem \ref{thrm:mainthrm} gives asymptotically optimal results. Our next theorem is not quite as precise.
\begin{theorem}
 \label{thrm:regular} 
Let $G=G(n,r)$ be a random $r$-regular graph where $r \geq 3$ is a fixed integer. 
Then, {\it whp} 
$$ rc(G) =\begin{cases}
          O(\log^4n)&r=3\\
          O(\log^{2\th_r}n)&r\geq 4.
          \end{cases}
$$ 
where $\th_r=\frac{\log (r-1)}{\log (r-2)}$.
\end{theorem}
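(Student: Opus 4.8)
The plan is to exploit the known expansion and ``local tree-like'' structure of a random $r$-regular graph. First I would recall the standard facts that hold \whp\ for $G=G(n,r)$: the diameter is $\sim \log n/\log(r-1)$, the graph is a good expander, and for any vertex $v$ the ball $B(v,t)$ around $v$ is a tree for $t$ up to roughly $\tfrac12\log_{r-1} n$, so that $|B(v,t)|$ grows like $(r-1)^t$. The overall strategy is a two-stage coloring: (i) color edges so that every vertex can reach a ``core'' high-expansion set via a short rainbow path using a bounded palette of colors reserved for short distances, and (ii) show that within the core, pairs of vertices can be joined by rainbow paths of length $O(\log n)$ using a fresh palette whose size is the claimed polylog bound.

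For step (ii) the key idea is a BFS/neighborhood-expansion argument. Fix two vertices $u,w$ in the core. Grow a rainbow BFS tree from $u$: at depth $t$ one wants $(r-1)^{t}$-ish many vertices reachable from $u$ by rainbow paths, but each such path has already consumed $t$ colors, so one needs enough colors that the expansion does not stall. If we use a palette of size $C\log^{a} n$, then a greedy/probabilistic coloring lets a rainbow BFS tree from $u$ reach depth $t^\ast$ with $t^\ast$ chosen so that $(r-1)^{t^\ast}$ exceeds $\sqrt n$; doing the same from $w$ and using expansion to guarantee the two frontiers share an edge gives a rainbow $u$--$w$ path, provided the two paths being concatenated use disjoint colors, which costs only a factor $2$ in the palette. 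The exponent $\th_r=\log(r-1)/\log(r-2)$ arises because, in the worst case, when a BFS tree must avoid the $t$ colors already used on the path reaching a given vertex, the effective branching factor drops from $r-1$ to about $r-2$; chasing the recursion $f(t)$ for the number of rainbow-reachable vertices, one finds one must iterate to depth $\approx (\log_{r-1} n)^{\th_r}$ to reach $\sqrt n$ vertices, and each unit of depth needs $O(1)$ new colors, giving $O(\log^{\th_r} n)$ per endpoint and $O(\log^{2\th_r} n)$ after the union over the two trees and over all $\binom n2$ pairs (the union bound is absorbed because the failure probability for a single pair is $n^{-\omega(1)}$). For $r=3$ the branching factor $r-2=1$ degenerates, so the naive recursion fails; instead I would use a coarser argument (e.g.\ partition the path-length budget into $O(\log n)$ blocks, each contributing an independent expansion step handled by a separate sub-palette of size $O(\log^{3} n)$ via a second-moment or Chernoff estimate on a random coloring), yielding the weaker $O(\log^4 n)$ bound.

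For step (i), every vertex $v$ lies within distance $O(\log n)$ of the core by the diameter bound, and along a fixed shortest path of length $\ell=O(\log n)$ from $v$ to the core we simply need $\ell$ distinct colors; since these can be reused across vertices as long as overlapping path-segments are colored consistently, a palette of size $O(\log n)$ suffices here — dominated by the step-(ii) bound. Assembling: a rainbow $v$--$v'$ path is (short path into core) $+$ (rainbow core path) $+$ (short path out), and keeping the three palettes disjoint only multiplies the color count by a constant.

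The main obstacle I expect is step (ii): controlling the rainbow BFS so that the ``used colors on the approach path'' do not choke off expansion. Making the recursion for the effective branching factor rigorous — in particular showing that \whp\ \emph{simultaneously for all starting vertices} the rainbow reachable set grows essentially like $(r-2)^{t}$ until it is polynomially large, and then switching to a pure expansion argument to close the gap to a common edge — is the technical heart, and is exactly where the exponent $\th_r$ (and the separate, weaker treatment of $r=3$) comes from.
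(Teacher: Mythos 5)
There is a genuine gap, and it sits exactly at what you yourself call the technical heart. Your scheme relies on a ``greedy/probabilistic coloring'' with a polylog palette making rainbow BFS work from every vertex, but a uniformly random coloring with $q=n^{o(1)}$ colors provably fails: \whp\ there are $\Omega(nq^{1-r^2})=\Omega(n^{1-o(1)})$ vertices all of whose edges within distance two share a color, so from such vertices no rainbow path of length two even exists. Likewise, in your step (i) the phrase ``these can be reused across vertices as long as overlapping path-segments are colored consistently'' assumes away the whole difficulty: an edge gets a single color, the $n$ paths overlap in complicated ways, and exhibiting one coloring that is simultaneously rainbow on all of them is precisely the problem. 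The paper resolves this by abandoning independent random colors and instead greedily properly coloring $H^{2k}$, the $2k$-th power of the line graph, with $q\approx 10(r-1)^{2k}$ colors, which \emph{deterministically} makes every local tree $T_x$ of depth $k$ rainbow; it then needs a separate, delicate argument (each edge's conditional color is within a factor $2$ of uniform on its available list) to show this non-product coloring still behaves randomly enough on the long connecting paths. None of this machinery, or a substitute for it, appears in your proposal.

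Second, your derivation of the exponent does not hang together. If the palette has polylog size and paths have length $O(\log n)$, avoiding the $t$ colors already used costs a vanishing fraction of the branching, not a whole unit, so ``effective branching $r-2$'' is unjustified; and even granting it, the arithmetic gives depth $\tfrac12\log_{r-2}n=\Theta(\log n)$ (not $(\log_{r-1}n)^{\th_r}$, which would exceed the diameter), hence $O(\log n)$ colors at ``$O(1)$ new colors per level,'' and joining the two endpoint trees adds palettes rather than squaring them -- so $O(\log^{2\th_r}n)$ never emerges from your recursion. In the paper the exponent has a different origin: one needs $\s=\Omega(\log n)$ pairs of color-disjoint root-to-leaf paths in $T_x,T_y$ to drive the per-pair failure probability below $n^{-2}$, and a Hall-type matching argument (Lemma \ref{lemcol}) on two rainbow $(r-1)$-ary trees only guarantees $(r-2)^{k-1}$ such pairs; this forces $k\approx\log_{r-2}\log n$, and the proper coloring of $H^{2k}$ then forces $q\gtrsim(r-1)^{2k}=\log^{2\th_r}n$. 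Finally, your $r=3$ treatment is a guess at the bound rather than an argument: the degeneracy $(r-2)=1$ is handled in the paper by inducting on depth-two subtrees (a matching of size $2$, so $\s=2^{\rdown{k/2}}$, roughly doubling $k$ and pushing $q$ to $\log^4n$) together with a special re-coloring for the $O(\log^2 n/\log\log n)$-sized neighborhoods of trees containing cycles, and your block-decomposition sketch offers no counterpart to either step.
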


\noindent All logarithms whose base is omitted are natural.
It will be clear from our proofs that the colorings in the above two theorems can be
constructed in a low order polynomial time. The second theorem, while weaker, contains
an unexpected use of a Markov Chain Monte-Carlo (MCMC) algorithm for randomly coloring a graph.

The paper is organized as follows: After giving a sketch of our approach in Section \ref{sketch},
in Sections~\ref{sec:proofs},~\ref{sec:regular} we prove 
Theorems~\ref{thrm:mainthrm},~\ref{thrm:regular}
respectively. Finally, in Section~\ref{sec:concl} we conclude by suggesting open problems.
\section{Sketch of approach}\label{sketch}
The general idea in the proofs of both theorems is as follows:
\begin{enumerate}[{\bf (a)}]
\item Randomly color the edges of the graph in question. For Theorem \ref{thrm:mainthrm} we can (in the main)
use a uniformly
random coloring. The distribution for Theorem \ref{thrm:regular} is a little more complicated. 
\item To prove that this works, we have to find, for each pair of vertices $x,y$, a large collection of edge disjoint
paths joining them. It will then be easy to argue that at least one of these paths is rainbow colored.
\item To find these paths we pick a typical vertex $x$. We grow a regular tree $T_x$ with root $x$. The depth is chosen
carefully. We argue that for a typical pair of vertices $x,y$, many of the leaves of $T_x$ and $T_y$ can be put
into 1-1 correspondence $f$ so that (i) the path $P_x$ from $x$ to leaf $v$ of $T_x$ is rainbow colored, (ii) the path 
$P_y$ from $y$ to the leaf $f(v)$ of $T_y$ is ranbow colored and (iii) $P_x,P_y$ do not share color.
\item We argue that from most of the leaves of $T_x,T_y$ we can grow a tree of depth approximately equal to half the diameter.
These latter trees themselves contain a bit more than $n^{1/2}$ leaves. These can be constructed so that they are vertex disjoint. Now
we argue that each pair of trees, one associated with $x$ and one associated with $y$, are joined by an edge. 
\item We now have, by construction, a large set of edge disjoint paths joining leaves $v$ of $T_x$ to leaves
$f(v)$ of $T_y$. A simple estimation shows that \whp\ for at least one leaf $v$ of $T_x$, the path from $v$ to $f(v)$ is 
rainbow colored and does not
use a color already used in the path from $x$ to $v$ in $T_x$ or the path from $y$ to $f(v)$ in $T_y$.
\end{enumerate}
We now fill in the details of both cases. 
\section{Proof of Theorem~\ref{thrm:mainthrm}} 
\label{sec:proofs}

Observe first that $rc(G)\geq \max\set{Z_1,diameter(G)}$. First of all, each edge incident to a vertex of 
degree one must have a distinct color.
Just consider a path joining two such vertices. Secondly, if the shortest distance between two vertices is $\ell$ 
then we need at least $\ell$ colors.
Next observe that \whp\ the diameter $D$ is asymptotically equal to $L$, 
see for example \cite{bollobas}. 
We break the proof of Theorem~\ref{thrm:mainthrm} into several lemmas. 

Let a vertex be {\em large} if $\deg(x)\geq \log{n}/100$ and {\em small} otherwise.
\begin{lemma} 
\label{lem2}
{\em Whp}, there do not exist two small vertices within distance 
at most $3L/4$. 
\end{lemma}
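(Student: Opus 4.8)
The plan is to bound the expected number of pairs of small vertices within distance $3L/4$ and show it is $o(1)$, then apply Markov's inequality. Recall that a vertex $x$ is small if $\deg(x) < \log n / 100$; since $p = (\log n + \om)/n$ and $\om = o(\log n)$, the degree of a fixed vertex is asymptotically Poisson with mean $\sim \log n$, so $\Pr[\deg(x) < \log n/100] = n^{-1 + o(1)} \cdot (\text{something})$ — more precisely, by a Chernoff-type bound on $\Bin(n-1,p)$, this probability is at most $n^{-c}$ for some constant $c > 0$ that I can take close to $1 - H(1/100)\cdot 1$ where the exponent works out to roughly $0.96 \log n$ in the exponent's numerator, i.e. the probability is $n^{-0.96 + o(1)}$ or so. The key point is just that it is $n^{-1 + \d}$ for a small explicit $\d$.

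First I would fix two vertices $x, y$ and a value $\ell \le 3L/4$, and estimate the probability that $x$ and $y$ are both small \emph{and} at distance exactly $\ell$. The natural approach is a union bound over the $\le n^{\ell-1}$ possible paths of length $\ell$ from $x$ to $y$: the probability a given such path is present is $p^\ell$, so $\Pr[\dist(x,y) \le \ell] \le \sum_{j\le \ell} n^{j-1} p^j = n^{-1}(np)^{\ell}(1+o(1)) \le n^{-1}(\log n)^{\ell}(1+o(1))$. Since $\ell \le 3L/4 = \frac{3}{4}\cdot\frac{\log n}{\log\log n}$, we have $(\log n)^{\ell} \le \exp(\ell \log\log n) \le \exp(\frac34 \log n) = n^{3/4}$, so $\Pr[\dist(x,y)\le 3L/4] \le n^{-1/4 + o(1)}$. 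The one subtlety is that the event ``$x,y$ both small'' is not independent of the event ``a short $x$-$y$ path exists,'' since the path uses up edges at $x$ and $y$. I would handle this by \emph{revealing the path first}: condition on a specific shortest path $P$ of length $\ell$ being present; the remaining edges at $x$ (there are $n - 1 - 1$ potential ones, minus the at most one used by $P$ at $x$) are still independent $\Bin$-distributed, so conditionally $\deg(x) \ge 1 + \Bin(n-2, p)$, and $\Pr[\deg(x) < \log n/100 \mid P] \le \Pr[\Bin(n-2,p) < \log n/100]= n^{-c+o(1)}$ as before; same for $y$, and these two are independent given $P$ since $x,y$'s edge-neighborhoods (outside $P$) are disjoint. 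So the joint probability for a fixed pair $(x,y)$ is at most $n^{-1/4+o(1)} \cdot n^{-c} \cdot n^{-c} = n^{-1/4 - 2c + o(1)}$.

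Then I would sum over all $\binom{n}{2} \le n^2$ pairs: the expected number of bad pairs is at most $n^2 \cdot n^{-1/4 - 2c + o(1)} = n^{7/4 - 2c + o(1)}$. With $c$ close to $1$ (it is at least, say, $0.9$ comfortably), the exponent $7/4 - 2c$ is negative, so the expectation is $o(1)$ and Markov's inequality finishes the proof. The main obstacle — really the only place care is needed — is the dependence between smallness and short-path existence, which the ``reveal the path first'' conditioning cleanly resolves; everything else is routine first-moment bookkeeping. I should also double-check that the constant $1/100$ in the definition of ``small'' is generous enough that $2c > 7/4$, i.e. $c > 7/8$; the Chernoff bound gives an exponent of essentially $(1 - \tfrac{1}{100}\log\tfrac{1}{100\cdot e/\log n}\cdots)$-type expression that is comfortably above $7/8$ since $1/100$ is so far below the mean fraction $1$, so there is plenty of room.
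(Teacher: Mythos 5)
Your proposal is correct and follows essentially the same route as the paper: a first-moment bound over pairs and over paths of length at most $3L/4$, with the degree condition applied only to edges from $x$ and $y$ to vertices off the revealed path (the paper encodes your ``reveal the path first'' step by using $\binom{n-1-k}{i}$ in its degree sum). The only nitpick is that for the claimed conditional independence you should count edges from $x$ (resp.\ $y$) to the $n-1-\ell$ non-path vertices, i.e.\ compare with $\Bin(n-1-\ell,p)$ rather than $\Bin(n-2,p)$, which only strengthens the bound and matches the paper's computation.
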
 
\begin{proof}
 \begin{align*}
&\Prob{\exists x,y \in [n]:\;\deg(x),\deg(y) \leq \log{n}/100\text{ and }dist(x,y) \leq \frac{3L}{4}}\\
&\leq \binom{n}{2} \sum_{k=1}^{3L/4} n^{k-1}p^k
\brac{\sum_{i=0}^{\log n/100}\binom{n-1-k}{i}p^i(1-p)^{n-1-k}}^2\\
&\leq \sum_{k=1}^{3L/4} n(2\log n)^k\brac{2\binom{n}{\log n/100}p^{\log n/100}(1-p)^{n-1-\log n/100}}^2\\
&\leq \sum_{k=1}^{3L/4} n(2\log n)^k\brac{2(100e^{1+o(1)})^{\log n/100}n^{-1+o(1)}}^2\\
&\leq \sum_{k=1}^{3L/4} n(2\log n)^k n^{-1.9}\\
&\leq 2n(2\log n)^{3L/4}n^{-1.9}\\
&\leq n^{-.1}.
\end{align*}

\end{proof}

\noindent 
We use the notation $e[S]$ for the number of edges induced by a given set of vertices $S$. Notice that 
if a set $S$ satisfies $e[S] \geq s+t$ where $t\geq 1$, the induced subgraph $G[S]$ has at least $t+1$ cycles.

\begin{lemma}
\label{lem3} 
Fix $t\in \field{Z}^+$ and $0<\alpha<1$. Then, {\em whp} there does not exist a subset $S \subseteq [n]$, 
such that $|S| \leq \alpha t L$ and $e[S] \geq |S|+t$. 
\end{lemma}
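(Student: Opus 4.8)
The plan is a direct first‑moment (union bound) estimate. Write $\mathcal E_t$ for the event in question and let $X$ count the subsets $S\subseteq[n]$ with $|S|\le \alpha t L$ and $e[S]\ge |S|+t$; I will show $\Mean X=o(1)$ and invoke Markov's inequality. For a fixed size $s$ with $1\le s\le \alpha t L$, a set $S$ with $|S|=s$ and $e[S]\ge s+t$ must contain some specified collection of $s+t$ of the $\binom s2$ possible edges, so
\[
\Mean X\ \le\ \sum_{s\le \alpha t L}\binom ns\binom{\binom s2}{s+t}p^{\,s+t}.
\]
Terms with $\binom s2<s+t$ contribute $0$, so only $s\gtrsim\sqrt{2t}$ matter; this is irrelevant for the bound.

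For the per‑$s$ estimate I would use the standard inequalities $\binom ns\le (en/s)^s$ and $\binom{\binom s2}{s+t}\le\bigl(e\binom s2/(s+t)\bigr)^{s+t}\le (es/2)^{s+t}$, together with $p\le 2\log n/n$ (valid for large $n$ since $\om=o(\log n)$). The $n$'s and $s$'s then cancel pleasantly:
\[
\binom ns\binom{\binom s2}{s+t}p^{\,s+t}\ \le\ \Bigl(\tfrac{en}{s}\Bigr)^{s}\Bigl(\tfrac{es}{2}\cdot\tfrac{2\log n}{n}\Bigr)^{s+t}
=\bigl(e^2\log n\bigr)^{s}\Bigl(\tfrac{es\log n}{n}\Bigr)^{t}.
\]
Now the hypothesis $s\le \alpha t L=\alpha t\,\log n/\log\log n$ does the work: taking logs, $\bigl(e^2\log n\bigr)^{s}\le \bigl(e^2\log n\bigr)^{\alpha t L}=n^{\alpha t(1+2/\log\log n)}=n^{\alpha t+o(1)}$, while $es\log n/n\le e\log^2 n/n$ gives $\bigl(es\log n/n\bigr)^{t}\le n^{-t+o(1)}$. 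Hence each term is at most $n^{-(1-\alpha)t+o(1)}$.

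Since there are at most $\alpha t L+1=O(\log n)=n^{o(1)}$ values of $s$, we get $\Mean X\le n^{-(1-\alpha)t+o(1)}$, and because $t$ and $\alpha$ are fixed with $0<\alpha<1$, the exponent $(1-\alpha)t$ is a fixed positive constant, so $\Mean X\to0$ and $X=0$ \whp. The only point requiring any care is that the factor $(e^2\log n)^{s}$ — which is what survives after combining $\binom ns$ with the ``$s$‑part'' of $\binom{\binom s2}{s+t}p^{s}$ — must not swamp the gain from the leftover $p^{t}$; this is precisely why $|S|\le \alpha tL$ with $\alpha$ strictly below $1$ is assumed, as it caps that factor at $n^{\alpha t+o(1)}$, which is beaten by the $n^{-t+o(1)}$ coming from $(es\log n/n)^{t}$.
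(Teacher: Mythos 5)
Your proposal is correct and follows essentially the same route as the paper: a first-moment union bound over $s\le \alpha tL$, the standard estimates $\binom ns\le (en/s)^s$ and $\binom{\binom s2}{s+t}p^{s+t}\le (esp/2)^{s+t}$ (up to the paper's slightly different handling of the $s+t$ denominator), and the observation that $s\le\alpha tL$ caps the surviving factor $(e^{2+o(1)}\log n)^s$ at $n^{\alpha t+o(1)}$, which is beaten by the $n^{-t+o(1)}$ from the leftover $p^t$, yielding a bound of order $n^{-(1-\alpha-o(1))t}$ exactly as in the paper.
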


\begin{proof}

For convenience, let $s=|S|$ be the cardinality of the set $S$.Then,
\begin{align*}
\Prob{ \exists S: s \leq \alpha t L \text{~~and~~} e[S] \geq s+t } 
&\leq \sum_{s \leq \alpha t L} \binom{n}{s} \binom{\binom{s}{2}}{s+t} p^{s+t}\\
&\leq \sum_{s \leq \alpha t L } \bfrac{ne}{s}^s \bfrac{es^2p}{2(s+t)}^{s+t} \\ 
&\leq \sum_{s \leq \alpha t L } (e^{2+o(1)}\log{n})^s \bfrac{es\log{n}}{n}^t \\
&\leq  \alpha tL \brac{ (e^{2+o(1)}\log{n})^{ \alpha L} \bfrac{e \alpha t\log^2{n} }{n\log{\log{n}}}}^t \\ 
&< \frac{1}{n^{(1-\a-o(1))t}}.
\end{align*}

\end{proof}

\begin{remark}\label{rem1}
Let $T$ be a rooted tree of depth at most $4L/7$ and let $v$ be a vertex not in $T$, but with $b$ neighbors in $T$.
Let $S$ consist of $v$, the neighbors of $v$ in $T$ plus the ancestors of these neighbors. Then 
$|S|\leq 4bL/7+1\le 3bL/5$ and $e(S)=|S|+b-2$. It follows from {\red the proof of} 
Lemma \ref{lem3} with $\a=3/5$ and $t=8$, that 
we must have $b\leq 10$
with probability $1-o(n^{-3})$.
\end{remark}

\noindent Our next lemma shows the existence of the subgraph $G'_{x,y}$ described next and shown in 
Figure~\ref{fig:fig1}
for a given pair of vertices $x,y$. 
We first deal with paths between large vertices.

\begin{figure*}
\includegraphics[width=0.7\textwidth]{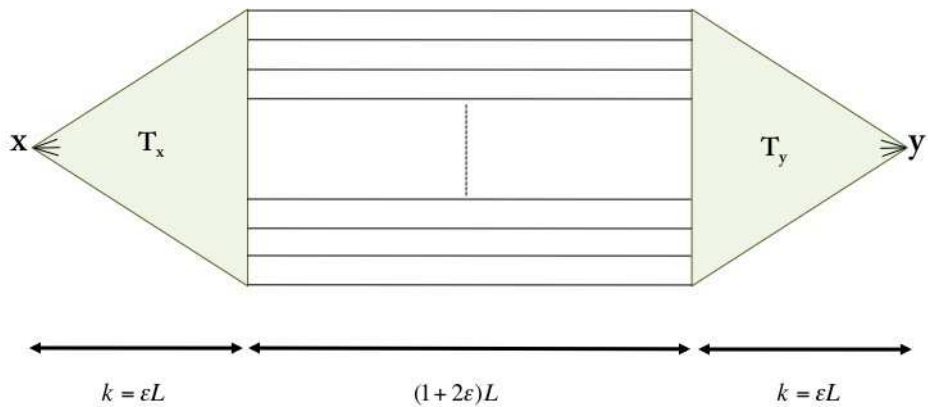}
\caption{Structure of Lemma~\ref{lem4}.}
\label{fig:fig1}
\end{figure*}
Now let 
\beq{eps}
\text{$\e=\e(n)=o(1)$ be such that $\frac{\e\log\log n}{\log 1/\e}\to \infty$ and let $k=\e L$.}
\eeq
Here $L$ is defined in \eqref{Ldef} and we could take $\e=1/(\log\log n)^{1/2}$.
\begin{lemma}
\label{lem4} 
{\red Whp,} for all pairs of large vertices $x,y \in [n]$
there exists a subgraph $G_{x,y}(V_{x,y},E_{x,y})$ of $G$ as shown in figure~\ref{fig:fig1}.
The subgraph consists of two isomorphic vertex disjoint trees $T_x,T_y$ rooted at $x,y$ each of depth $k$.
$T_x$ and $T_y$ both have a branching factor of $\log n/101$. I.e. each vertex of $T_x,T_y$ has
at least  $\log n/101$ neighbors, excluding its parent in the tree.
Let the leaves of $T_x$ be $x_1,x_2,\ldots,x_\t$ where $\t\geq n^{4\e/5}$ and those of $T_y$ be
$y_1,y_2,\ldots,y_\t$. Then $y_i=f(x_i)$ where $f$ is a natural 
isomporphism that preserves the parent-child relation.
Between each pair of leaves $(x_i,y_i),i=1,2,\ldots,\t$ 
there is a path $P_i$ of length  $(1+2\epsilon) L$. The paths $P_i,i=1,2,\ldots,\t$ are edge disjoint.
\end{lemma}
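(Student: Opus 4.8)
The plan is to construct the two trees $T_x,T_y$ by breadth-first search in $G$, and then to produce the paths $P_i$ one at a time, each by growing a BFS tree of depth $\approx L/2$ from $x_i$ and from $y_i$ and splicing them. I use the splitting $G(n,p)=G_1\cup G_2$ with $G_1=G(n,p_1)$, $p_1=\tfrac{\log n+\om/2}{n}$, and $G_2=G(n,p_2)$ independent, $1-p=(1-p_1)(1-p_2)$, so $p_2\sim\tfrac{\om}{2n}$; since $p_1$ again has the form $\tfrac{\log n+\om'}{n}$ with $\om'\to\infty$, $\om'=o(\log n)$, Lemmas~\ref{lem2}, \ref{lem3}, Remark~\ref{rem1} (and the easy extension of the latter to a forest of at most two trees) hold for $G_1$; I condition, at cost $o(1)$, on these global events. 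All trees are grown in $G_1$, and $G_2$ is reserved for the edges that join a leaf of one tree to a leaf of another.

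\emph{The trees.} Run BFS from $x$ in $G_1$ to depth $k=\e L$; when a vertex $v$ at depth $<k$ is processed, delete it from the partial tree first, so the remainder is a tree of depth $<4L/7$: by Remark~\ref{rem1}, $v$ has at most $10$ neighbours in it, and by Lemma~\ref{lem2} at most one vertex of $B(x,\e L)$ is small, so $v$ has at least $\lceil\log n/101\rceil$ large neighbours outside the tree, of which we keep $\lceil\log n/101\rceil$ as its children. This gives a perfect $\lceil\log n/101\rceil$-ary tree $T_x$ of depth $k$, all of whose vertices are large. Repeating from $y$ while also forbidding $V(T_x)$ (so that $v$ has at most $10$ neighbours in $T_x\cup T_y$, by the forest version of Remark~\ref{rem1}) yields a vertex-disjoint copy $T_y$; any level-preserving bijection is the required isomorphism $f$, and $\t=\lceil\log n/101\rceil^{k}=n^{\e(1-o(1))}\ge n^{4\e/5}$.

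\emph{The paths.} Put $d_1=\lfloor((1+2\e)L-1)/2\rfloor$, so $2d_1+1\le(1+2\e)L$ and $d_1\le(\tfrac12+\e)L<4L/7$. For $i=1,\dots,\t$ in turn, set $W_{i-1}=(V(T_x)\cup V(T_y)\cup\bigcup_{j<i}V(P_j))\setminus\{x_i,y_i\}$, so $|W_{i-1}|=O(n^{\e})$ and $x_i,y_i\notin W_{i-1}$. Grow a BFS tree $A_i$ of depth $d_1$ from $x_i$ inside $G_1-W_{i-1}$: as above a processed vertex loses at most $10$ edges to $A_i$ itself (Remark~\ref{rem1}), at most $\sqrt{\log n}$ edges to the \emph{fixed} set $W_{i-1}$ (their number is a binomial variable of mean $o(1)$), and a $d_1$-ball contains only $O(1)$ small vertices \whp, so $A_i$ has $\ge n^{1/2+\e-o(1)}$ leaves, all large. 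If $y_i\in V(A_i)$, take $P_i$ to be the $x_i$--$y_i$ path inside $A_i$ (length $\le d_1$). Otherwise grow a BFS tree $B_i$ of depth $d_1$ from $y_i$ inside $G_1-W_{i-1}-V(A_i)$ (legitimate since $y_i\notin A_i$; a vertex just outside the ball $A_i$ still has at most $10$ neighbours in it, so $B_i$ grows at the same rate and has $\ge n^{1/2+\e-o(1)}$ leaves); the number of $G_2$-edges between the leaves of $A_i$ and those of $B_i$ is binomial with mean $\ge n^{2\e-o(1)}\om\to\infty$, so \whp\ there is such an edge $ab$ avoiding $\bigcup_{j<i}E(P_j)$, and $P_i$ is obtained by splicing $x_i\!\to\!a$, $ab$, $b\!\to\!y_i$ (length $2d_1+1\le(1+2\e)L$). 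In either case $P_i\subseteq V(A_i)\cup V(B_i)$, which avoids $W_{i-1}$, so the $P_i$ are pairwise vertex-disjoint, meet $T_x,T_y$ only at $x_i,y_i$, and can be lengthened to length exactly $(1+2\e)L$ by a bounded detour if desired.

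\emph{Union bound and main difficulty.} After the conditioning, for a fixed pair $x,y$ the construction fails only if some processed vertex has $>10$ back-edges into the tree(s) it must avoid (excluded by Remark~\ref{rem1}), or $>\sqrt{\log n}$ neighbours in the relevant $O(n^{\e})$-set (probability $e^{-\Omega((\log n)^{3/2})}$), or lies in a $d_1$-ball carrying more than a bounded number of small vertices (probability $o(n^{-3})$, since a vertex is small with probability $n^{-\Omega(1)}$ and the ball meets $\le n^{1/2+\e}$ vertices), or the spliced $G_2$-edge is missing (probability $e^{-n^{\Omega(\e)}}$). Summing over the $\t=n^{o(1)}$ paths and the $<n^2$ pairs gives the lemma \whp. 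The step requiring the most care is this breadth-first bookkeeping: one must keep every avoided tree, with the current vertex removed, of depth below $4L/7$ so that Remark~\ref{rem1} applies — which is exactly what forces $d_1\le(\tfrac12+\e)L$ — must ensure the edges counted against $W_{i-1}$ and the spliced $G_2$-edges are genuinely unrevealed when they are examined (hence $G_2$ is reserved, and the scaffolding $A_i,B_i$ is discarded once $P_i$ is extracted, keeping $|W_{i-1}|=n^{o(1)}$), and must check that the $O(1)$ stray small vertices inside a half-diameter ball cost only an $o(1)$ fraction of its leaves.
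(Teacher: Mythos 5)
Your overall plan follows the paper's strategy (BFS trees $T_x,T_y$ of depth $\e L$, deep trees of depth $\approx(\tfrac12+\e)L$ from their leaves, one connecting edge per pair of leaves), but you replace the paper's key device --- growing \emph{all} the deep trees vertex-disjointly and then observing that the connecting edges were never examined --- by a sequential construction in which the scaffolding $A_j,B_j$ is discarded and only the $n^{o(1)}$ path vertices are forbidden, with a sprinkled $G_2$ supplying the splice edges. This is where there is a genuine gap. The roots $x_i,x_j$ of different scaffolding trees are within distance $2k=2\e L\ll d_1$ of each other and the forbidden sets $W_{i-1}$ differ by only $n^{o(1)}$ vertices, so nothing prevents the leaves of $A_i,B_i$ from overlapping massively with the leaves of earlier $A_j,B_j$. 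Consequently many of the $G_2$ pairs you examine at step $i$ may already have been examined --- and found to be non-edges --- at earlier steps; discarding the scaffolding does not ``un-reveal'' them, and your binomial computation (mean $n^{2\e-o(1)}\om\to\infty$) is not valid conditional on that history. A crude count does not rescue it: over the $\t=n^{4\e/5}$ steps the number of exposed pairs can exceed the $\approx n^{1+2\e}$ pairs available at a single step, so you would need a genuinely new argument (e.g.\ exposing only until first success and controlling the exposure count \whp, or reserving an independent sprinkle per index $i$), none of which appears in your write-up. The same re-exposure problem infects the step ``at most $\sqrt{\log n}$ edges to the fixed set $W_{i-1}$, a binomial of mean $o(1)$'': a vertex reached by $A_i$ that was already explored inside some earlier $A_j$ has its $G_1$-neighbourhood fully determined, so no fresh-randomness claim is available there --- though this particular step is repairable deterministically, since $W_{i-1}$ is a union of two trees of depth $k+d_1<4L/7$ rooted at $x$ and $y$ and Lemma~\ref{lem3}/Remark~\ref{rem1} then bound the number of such edges by a constant.

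The clean fix is exactly what the paper does: keep every previously grown deep tree in the forbidden set when growing the next one. The retained structure on the $x$-side (and likewise the $y$-side) is a single tree of depth $k+\g<4L/7$, so Remark~\ref{rem1} still gives at most $10+10$ bad edges per explored vertex, and the total volume $O\brac{n^{4\e/5}\cdot n^{1/2+\e}}=o(n)$ is affordable; with the deep trees pairwise vertex disjoint, each potential connecting edge between the leaf sets $X_i,Y_i$ is examined at most once and is genuinely unexposed, so no $G_1/G_2$ sprinkling is needed at all, and the conditioning on the growth events is disposed of by the quotient bound $\Prob{\exists i:e(X_i,Y_i)=0\mid {\bf A}}\leq \Prob{\exists i:e(X_i,Y_i)=0}/\Prob{{\bf A}}$ as in the paper. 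Your construction of $T_x,T_y$ themselves (pruning to a $\lceil\log n/101\rceil$-ary tree, handling the single small vertex via Lemma~\ref{lem2}, the two-tree extension of Remark~\ref{rem1}) matches the paper and is fine.
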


\begin{proof}
Because we have to do this for all pairs $x,y$, we note without further comment that likely (resp. unlikely) 
events will
be shown to occur with probability $1-o(n^{-2})$ (resp. $o(n^{-2}$)).

To find the subgraph shown in Figure~\ref{fig:fig1} we grow tree structures as shown 
in Figure~\ref{fig:fig2}. 
Specifically, we first grow a tree from $x$ using BFS until it reaches  depth $k$. 
Then, we grow a tree starting from $y$ again using BFS until it reaches depth $k$. 
Finally, we grow trees from the leaves of $T_x$ and $T_y$ using BFS for depth $\gamma=(\frac{1}{2}+\epsilon)L$. 
Now we analyze these processes. Since the argument is the same we explain
it in detail for $T_x$ and we outline the differences for the other trees. 
We use the notation $D_i^{(\r)}$ for the number of vertices at depth $i$
of the BFS tree rooted at $\r$.

\begin{figure*}
\includegraphics[width=0.7\textwidth]{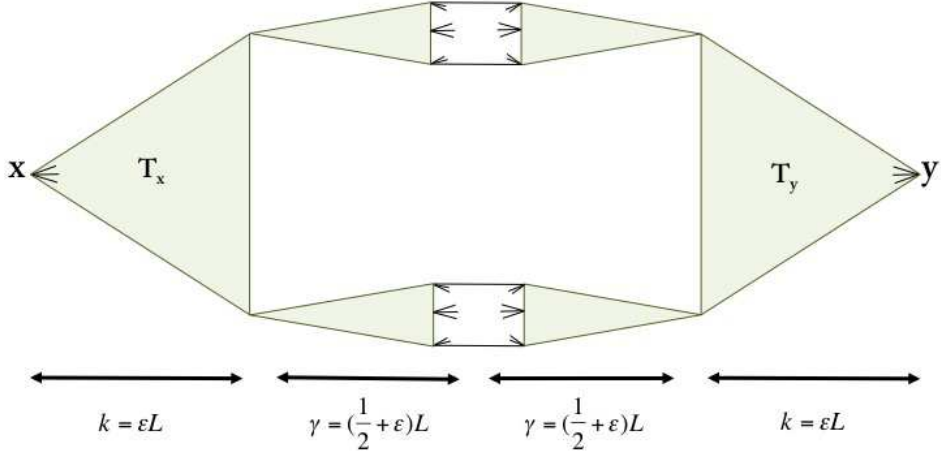}
\caption{Subgraph found in the proof of Lemma~\ref{lem4}.}
\label{fig:fig2}
\end{figure*}

First we grow $T_x$. As we grow the tree via BFS from a vertex $v$ at depth $i$ to vertices 
at depth $i+1$ certain {\em bad} edges from $v$ may point 
to vertices already in $T_x$. Remark \ref{rem1} shows with probability $1-o(n^{-3})$ there can be at most
10 bad edges emanating from $v$.

 Furthermore, 
Lemma \ref{lem2} implies that there exists at most one vertex of degree less than $\frac{\log{n}}{100}$ 
at each level {\it whp}. 
Hence, we obtain the recursion  

\beq{rec1}
D_{i+1}^{(x)} \geq \brac{\frac{\log{n}}{100}-10} (D_i^{(x)}-1) \geq \frac{\log{n}}{101}D_i^{(x)}.
\eeq

\noindent Therefore the number of leaves satisfies 

\beq{rec2}
D_{k}^{(x)} \geq \bfrac{\log n}{101}^{\e L}\geq n^{4\e/5}.
\eeq
We can make the branching factor exactly $\frac{\log n}{101}$ by pruning.
We do this so that the trees $T_x$ are isomorphic to each other.

With a similar argument 
\beq{rec3}
D_{k}^{(y)} \geq n^{\frac{4}{5}\epsilon}.
\eeq
The only difference is that now we also say an edge is bad if the other endpoint is in $T_x$.
This immediately gives
$$D_{i+1}^{(y)} \geq \brac{\frac{\log{n}}{100}-20} (D_i^{(y)}-1) \geq \frac{\log{n}}{101}D_i^{(y)}$$
and the required conclusion \eqref{rec3}.

Similarly, from each leaf $x_i \in T_x$ and $y_i \in T_y$ we grow trees $\hT_{x_i},\hT_{y_i}$ of depth 
$\gamma = \big(\frac{1}{2}+\epsilon\big) L$ using the same procedure and arguments 
as above. Remark \ref{rem1} implies that there are at most 20 edges from the vertex $v$ being explored to 
vertices in any of the trees already constructed. At most 10 to $T_x$ plus any trees rooted at an $x_i$ 
and another 10 for $y$.
The numbers of leaves of each $\hT_{x_i}$ now satisfies
$$\hD_\g^{(x_i)}\geq \frac{\log n}{100}\bfrac{\log n}{101}^{\g}\geq n^{\frac12+\frac{4}{5}\epsilon}.$$
Similarly for $\hD_\g^{(y_i)}$.

\noindent Observe next that BFS does not condition the edges between the leaves $X_i,Y_i$ of the trees $\hT_{x_i}$ 
and $\hT_{y_i}$.
I.e., we do not need to look at these edges in order to carry out our construction. 
On the other hand we have conditioned on the occurence of certain events to imply a certain growth rate.
We handle this technicality as follows. We go through the above construction and halt if ever we find that we cannot
expand by the required amount. Let ${\bf A}$ be the event that we do not halt the construction
i.e. we fail the conditions of Lemmas \ref{lem2} or \ref{lem3}. We have
$\Prob{{\bf A}}=1-o(1)$ and so,
$$\Prob{\exists i:e(X_i,Y_i)=0\mid {\bf A}}\leq \frac{\Prob{\exists i:e(X_i,Y_i)=0}}{\Pr({\bf A})}\leq
2n^{\frac{4\e}{5}}(1-p)^{n^{1+\frac{8\e}{5}}}\leq n^{-n^\e}.$$
We conclude that {\em whp} there is always an edge between each $X_i,Y_i$ and thus a path of length at most
$(1+2\e)L$ between each $x_i,y_i$.
\end{proof}

\noindent Let $q=(1+5\e)L$ be the number of available colors. 
We color the edges of $G$ randomly. 
We show that the probability of having a rainbow path between $x,y$ in the 
subgraph $G_{x,y}$ of Figure~\ref{fig:fig1}  is at least $1-\frac{1}{n^3}$. 

\begin{lemma}
\label{lem5}
Color each edge of $G$ using one color at random from $q$ available. Then, the probability of having at least one 
rainbow path 
between two fixed large vertices $x,y \in [n]$ is at least $1-\frac{1}{n^3}$. 
\end{lemma}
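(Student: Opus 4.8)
The plan is to exploit the structure guaranteed by Lemma~\ref{lem4}: we have $\tau \geq n^{4\e/5}$ edge-disjoint candidate paths from $x$ to $y$, each of the form $Q_i = P_x^{(i)} \cup P_i \cup P_y^{(i)}$, where $P_x^{(i)}$ is the root-to-leaf path in $T_x$ from $x$ to $x_i$ (length $k = \e L$), $P_y^{(i)}$ is the corresponding path in $T_y$ from $y$ to $y_i = f(x_i)$ (length $k$), and $P_i$ is the connecting path of length $(1+2\e)L$. Thus each $Q_i$ has length at most $(1+4\e)L < q = (1+5\e)L$, so a uniformly random coloring from $q$ colors has a genuine chance of making $Q_i$ rainbow. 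The goal is to show that, whp over the coloring, at least one $Q_i$ is rainbow.

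The first issue is that the paths $P_x^{(i)}$ for different $i$ share edges near the root, so the events ``$Q_i$ is rainbow'' are far from independent; a naive second-moment or union-complement argument over all $i$ fails. The fix I would use is to condition on the coloring of $T_x$ and $T_y$ first, then work within the connecting paths $P_i$, which \emph{are} mutually edge-disjoint and also edge-disjoint from $T_x \cup T_y$. So: (i) reveal the colors on $E(T_x) \cup E(T_y)$. For a single leaf $x_i$, the path $P_x^{(i)}$ has only $\e L$ edges, and the expected number of color-repetitions on it is $O((\e L)^2/q) = O(\e^2 L) = o(\e L)$ small relative to its length; more to the point, I would argue that whp a positive fraction — indeed $1 - o(1)$ fraction — of the $\tau$ leaves $x_i$ have the property that $P_x^{(i)}$ is rainbow, and similarly for $T_y$. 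Since $T_x$ and $T_y$ are isomorphic via $f$ and colored independently, one can further restrict to a set $I$ of at least, say, $\tau^{1-o(1)}$ indices $i$ for which $P_x^{(i)}$ is rainbow, $P_y^{(i)}$ is rainbow, and the two paths share no color — the last because two random $(\e L)$-subsets of a $q$-set of colors are disjoint with probability $\geq 1 - 2\e L \cdot \e L / q = 1 - O(\e^2 L)$, which is $1-o(1)$, so again only an $o(1)$ fraction of indices is lost. (Here one should be slightly careful and prove concentration of the count $|I|$, e.g. via a bounded-differences / Azuma argument on the edges of $T_x \cup T_y$, since $|I|$ is a function of those colors; each edge color affects $|I|$ by at most the number of leaves below it, which is controlled by the branching structure.)

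Now condition on such an $I$ with $|I| \geq n^{\e/2}$, say, and on the colors already used on the $T_x$-- and $T_y$--parts. For each $i \in I$, the connecting path $P_i$ has $(1+2\e)L$ edges, all still unconditioned and all disjoint from each other across $i \in I$. The probability that $P_i$ is rainbow \emph{and} avoids the (at most $2\e L$) colors already used on $P_x^{(i)} \cup P_y^{(i)}$ is at least
\[
\prod_{j=0}^{(1+2\e)L - 1} \frac{q - 2\e L - j}{q} \;\geq\; \left(1 - \frac{(1+4\e)L}{q}\right)^{(1+2\e)L} \;\geq\; \left(\frac{\e}{1+5\e}\right)^{(1+2\e)L} \;=\; e^{-O(L \log(1/\e))},
\]
which is $n^{-o(1)}$ by the choice of $\e$ in \eqref{eps} (precisely, $L\log(1/\e) = \frac{\log n}{\log\log n}\log(1/\e) = o(\log n)$ exactly when $\frac{\e\log\log n}{\log(1/\e)}\to\infty$ is imposed — this is what that condition buys us, modulo replacing $\e$ by a constant multiple). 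Call this probability $\pi \geq n^{-o(1)}$. Because the paths $P_i$, $i\in I$, are edge-disjoint and their edges are independent of everything conditioned on, the events ``$Q_i$ is rainbow'' for $i \in I$ are now \emph{independent}, so
\[
\Prob{\text{no } Q_i \text{ rainbow} \mid I, \text{colors on } T_x\cup T_y} \;\leq\; (1-\pi)^{|I|} \;\leq\; \exp\!\left(-\pi\, n^{\e/2}\right) \;=\; \exp\!\left(-n^{\e/2 - o(1)}\right) \;\ll\; n^{-3}.
\]
Averaging over the conditioning (the bad event that $|I|$ is too small has probability $o(n^{-3})$ by the concentration step) gives the claimed bound $1 - n^{-3}$.

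The main obstacle is the dependency bookkeeping in the middle step: one must make precise that after revealing the $T_x \cup T_y$ colors, a $1-o(1)$ fraction of leaves survive into $I$ \emph{with high probability} (not just in expectation), and that the surviving structure is still rich enough ($|I| = n^{\Omega(\e)}$) to drive the final product estimate below $n^{-3}$. I expect this is handled by a martingale concentration on $|I|$ using the tree's bounded branching ($\log n / 101$) to bound the effect of each edge's color, together with the crude bound that a uniformly random $(\e L)$-element color sequence from $q \gg \e L$ colors is rainbow with probability bounded below by a constant (in fact $1-o(1)$). Everything else is the routine product computation above and a union bound over the $\binom{n}{2}$ pairs of large vertices, which survives since $n^2 \cdot n^{-3} = o(1)$.
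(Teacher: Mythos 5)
Your overall skeleton is the same as the paper's: first exhibit many ``living'' pairs of leaves (both tree paths rainbow and color-disjoint), then use the edge-disjointness of the connecting paths $P_i$ (whose colors are independent of the conditioning) to show at least one full path is rainbow; your Step 2 computation is essentially identical to the paper's and is fine. The gap is in the middle step. You claim that a $1-o(1)$ fraction of leaves $x_i$ have $P_x^{(i)}$ rainbow, and that two paths of length $k=\e L$ are color-disjoint with probability $1-O(\e^2 L)$. But under \eqref{eps} one has $\e^2 L=\e^2\log n/\log\log n\to\infty$ (e.g.\ $\e=(\log\log n)^{-1/2}$ gives $\e^2L=\log n/(\log\log n)^2$), so the expected number of color collisions on a single root-to-leaf path, $\Theta(k^2/q)=\Theta(\e^2 L)$, diverges; the probability that one such path is rainbow is $e^{-\Theta(\e^2 L)}=o(1)$, not $1-o(1)$, and your bound $1-O(\e^2 L)$ for color-disjointness is vacuous. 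Writing $O(\e^2L)=o(\e L)$ is true but irrelevant: ``few repetitions relative to the length'' does not give ``no repetitions whp''. So only an $e^{-\Theta(\e^2 L)}$ fraction of leaves survives in expectation. This by itself does not kill the count (the surviving fraction is $n^{-o(\e)}$, so $\E|I|$ is still $n^{4\e/5-o(\e)}$), but your proposed route to a high-probability lower bound on $|I|$ does not work: in a bounded-differences/Azuma argument the color of an edge near the root affects all $\approx\t$ leaves below it, so the Lipschitz constants are of order $\t$ and the resulting tail bound is vacuous --- certainly nowhere near the $o(n^{-2})$ (or $o(n^{-3})$) failure probability you need to survive the union bound over all pairs $x,y$.

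The paper closes exactly this gap with a top-down branching-process argument rather than concentration of a global leaf count: it tracks the number $A_j$ of living \emph{pairs} at depth $j$, notes that conditioned on the living pairs at depth $j-1$ the colorings of their descendant edges are independent, and each child pair survives with probability at least $p_0=(1-2k/q)^2$, a constant; a Chernoff bound per level (with a separate elementary bound \eqref{A1} at level one) gives failure probability $O(n^{-\Omega(\log\log n)})$ per level and hence $A_k\geq(\log n/200)^k p_0^{k-1}\geq n^{4\e/5}$ with the required probability. If you replace your ``$1-o(1)$ fraction plus Azuma'' step by such a level-by-level recursion (or any argument giving $|I|\geq n^{\Omega(\e)}$ with failure probability $o(n^{-2})$), the rest of your proof goes through as written.
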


\begin{proof}

We show that the subgraph $G_{x,y}$ contains such a path.
We break our proof into two steps:
 
\begin{figure*}
\includegraphics[width=0.7\textwidth]{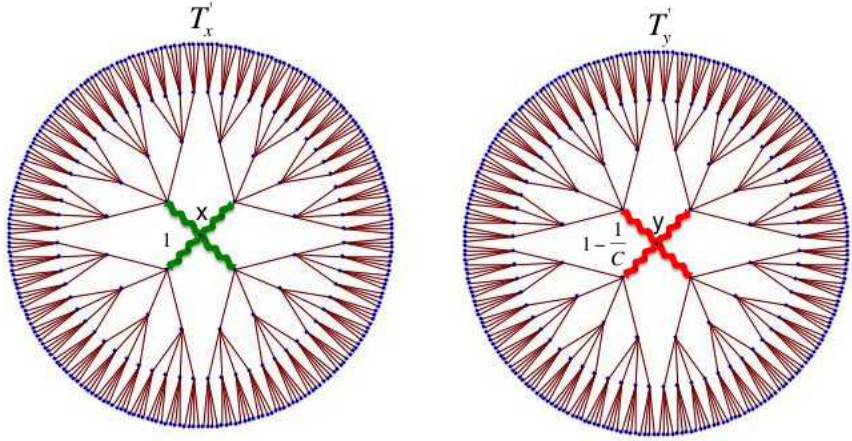}
\caption{Figure shows $\frac{\log{n}}{101}$-ary trees $T_x,T_y$. The two roots are shown respectively at the 
center of the trees. 
In our thinking of the random coloring as an evolutionary process, the green edges incident to $x$ survive
with probability 1, the red edges incident to $y$ with probability $1-\frac{1}{q}$ and all the other
edges with probability $p_0= \Big( 1- \frac{2k}{q} \Big)^2$ where $k$ is the depth of both trees
and $q$ the number of available colors. 
Our analysis in Lemma~\ref{lem4} using these probabilities gives a lower bound on the number of 
alive pairs of leaves after coloring $T_x,T_y$ from the root to the leaves respectively.}
\label{fig:fig3}
\end{figure*}

Before we proceed, we provide certain necessary definitions. 
Think of the process of coloring $T_x,T_y$ as an evolutionary process that colors edges 
by starting from the two roots $x, f(x)=y$ until it reaches the leaves. 
In the following, we call a vertex $u$ of $T_x$ ($T_y$)
{\red {\em alive/living}} if the path $P(x,u)$ ($P(y,u)$) from $x$ ($y$) to $u$ is rainbow, i.e., the edges have received 
distinct colors. 
We  call a pair of vertices $\{u,f(u)\}$ alive, $u \in T_x, f(u) \in T_y$ if $u,f(u)$ are both {\it alive}
and the paths $P(x,u),P(y,f(u))$ share no color. 
Define $A_j=|\{(u,f(u)): (u,f(u)) \text{~is alive and depth}(u)=j  \}|$ for $j=1,..,k$.

\noindent
\underline{$\bullet$ {\sc Step 1:} Existence of at least $n^{\tfrac{4}{5}\epsilon}$ living pairs of leaves}\\

Assume the pair of vertices $\{u,f(u)\}$ is alive where $u \in T_x, f(u) \in T_y$. 
It is worth noticing that $u,f(u)$
have the same depth in their trees. We are interested in the number of pairs of children 
$\{u_i, f(u_i)\}_{i=1,..,\log{n}/101}$ 
that will be alive after coloring the edges from $\depth(u)$ to $\depth(u)+1$. 
A living pair $\{u_i,f(u_i)\}$ by definition has the following properties:
edges $(u,u_i) \in E(T_x)$ and $(f(u),f(u_i))\in E(T_y)$ receive two distinct colors, 
which are different from the set of colors used in paths $P(x,u)$ and $P(y,f(u))$.  
Notice the latter set of colors has cardinality $2 \times \text{depth}(u) \leq 2 k$. 

Let $A_j$ be the number of living pairs at depth $j$. We first bound the size of $A_1$.
\beq{A1}
{\red \Prob{A_1\leq \frac{\log n}{200}}\leq 2^{\log n/101}\bfrac{1}{q}^{\log n/300}=O(n^{-\Omega(\log\log n)}).}
\eeq
{\red Here $2^{\log n/101}$ bounds the number of choices for $A_1$. For a fixed set $A_1$ there will
be at least $\frac{\log n}{101}-\frac{\log n}{200}\geq \frac{\log n}{300}$ edges incident with 
$x$ that have the same color as their corresponding edges incident with $y$, under $f$. The factor $q^{-\log n/300}$
bounds the probability of this event}. 

For $j>1$ we see that the random variable equal to the number of living 
pairs of children of $(u,f(u))$ stochastically dominates
the random variable $X \sim \Bin\brac{\frac{\log{n}}{101}, p_0}$, where $p_0 =  
\brac{1-\frac{2k}{q}}^2 = \big(\frac{1+3\epsilon}{1+5\epsilon}\big)^2$. 
The colorings of the descendants of each live pair are independent and so
we have using the Chernoff bounds for $2\leq j\leq k$,
\begin{multline}\label{indu}
\Prob{A_j < \bfrac{\log{n}}{200}^j p_0^{j-1} \bigg| A_{j-1} \geq \bfrac{\log{n}}{200}^{j-1} p_0^{j-2}} \\
\leq \exp\set{-\frac12\cdot\bfrac{99}{200}^2\cdot\frac{\log n}{101}\cdot\bfrac{\log n}{200}^{j-1}p_0^j}=
O(n^{-\Omega(\log\log n)}).
\end{multline}

\eqref{A1} and \eqref{indu} justify assuming that $A_k \geq \bfrac{\log n}{200}^k
{\red p_0^{k-1}}\ge n^{\frac{4}{5}\epsilon}$.

\noindent \\
\underline{$\bullet$ {\sc Step 2:} Existence of rainbow paths between $x,y$ in $G_{x,y}$}\\
Assuming that there are $\geq n^{4\e/5}$ living pairs of leaves $(x_i,y_i)$ for vertices $x,y$, 
$$\Pr(x,y\text{ are not rainbow connected})\leq \brac{1-\prod_{i=0}^{2\g-1}\brac{1-\frac{2k+i}{q}}}^{n^{4\e/5}}.$$
But
$$\prod_{i=0}^{2\g-1}\brac{1-\frac{2k+i}{q}}\geq \brac{1-\frac{2k+2\g}{q}}^{2\g}=\bfrac{\e}{1+5\e}^{2\g}.$$
So
\begin{multline}\label{lem10}
\Pr(x,y\text{ are not rainbow connected})\leq \exp\set{-n^{4\e/5}\bfrac{\e}{1+5\e}^{2\g}}\\
=\exp\set{-n^{4\e/5-O(\log(1/\e)/\log\log n)}}.
\end{multline}
Using \eqref{eps} and the union bound taking \eqref{lem10} over all large $x,y$ completes the proof of 
Lemma \ref{lem5}.
\end{proof}

\begin{figure}
  \centering
  \subfloat[]{\label{fig4:4a}\includegraphics[width=0.3\textwidth]{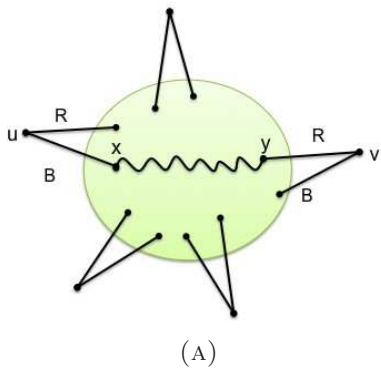}}                
  \caption{Taking care of small vertices.}
  \label{fig:fig4}
\end{figure}

\noindent We now finish the proof of Theorem~\ref{thrm:mainthrm} i.e. take care of small vertices. 

We showed in Lemma~\ref{lem5} that \whp\ for any two large vertices, a random coloring
results in a rainbow path joining them. We divide
the small vertices into two sets: vertices of degree 1, $V_1$ and the
vertices of degree at least 2, $V_2$. Suppose that our colors are $1,2,\ldots,q$
and $V_1=\set{v_1,v_2,\ldots,v_s}$. We begin by giving the edge incident with $v_i$ the color $i$.
Then we slightly modify the argument in Lemma \ref{lem5}. If $x$ is the neighbor
of $v_i\in V_1$ then color $i$ cannot be used in Steps 1 and 2 of that procedure. In terms of {\red analysis} this 
replaces $q$ by $(q-1)$ ($(q-2)$ if $y$ is also a neighbor of $V_1$) and the argument is essentially unchanged
i.e. \whp\ there will be a rainbow path between each pair of large vertices.  
Furthermore, any path starting at $v_i$ can only use color $i$ once and so there will be rainbow paths between
$V_1$ and $V_1$ and between $V_1$ and the set of large vertices.

The set $V_2$ is treated by using only two extra colors. 
Assume that Red and Blue have not been used in our coloring. Then we use
Red and Blue to color two of the edges incident to a vertex $u\in V_2$ (the remaining edges are colored arbitrarily).
This is shown in Figure~\ref{fig4:4a}. Suppose that $V_2=\set{w_1,w_2,\ldots,w_t}$. Then if we want a rainbow path 
joining $w_i,w_j$ where $i<j$ then we use the red edge to go to its neighbor $w_i'$. Then we take the already  
constructed rainbow path to $w_j''$, the neighbor of $w_j$ via a blue edge. Then we can continue to $w_j$.
\proofend

\section{Proof of Theorem~\ref{thrm:regular}} 
\label{sec:regular} 
We first observe that simply randomly coloring the edges of $G=G(n,r)$ with $q=n^{o(1)}$ colors will 
not do. This is because there will \whp\ be {\red $\Omega(nq^{1-r^2})=\Omega(n^{1-o(1)})$} vertices $v$ where all 
edges at distance
at most two from $v$ have the same color.

We follow a similar 
strategy to the proof in Theorem \ref{thrm:mainthrm}. We grow small trees $T_x$ from each vertex $x$. 
Then for a pair
of vertices $x,y$ we build disjoint trees on the 
leaves of $T_x,T_y$ so that \whp\ we can find edge disjoint paths between any set of leaves $S_x$ of 
$T_x$ and any set of leaves of $S_y$ of the same size. A bounded number of leaves of $T_x,T_y$
will be excluded from this statement. The main difference will
come from our procedure for coloring the edges. Because of the similarities, we will give a little less detail in
the common parts of our proofs. We are in effect
talking about building a structure like that shown in Figure \ref{fig:fig2}. There is one difference, we will have
to take care of which leaves of $T_x$ we pair with which leaves of $T_y$, for a pair of vertices $x,y$.

Having grown the trees, we have the problem of coloring the edges.
Instead of independently and randomly coloring the edges, we use a greedy algorithm that produces
a coloring that is guaranteed to color edges differently, if they are close. This will guarantee that 
the edges of $T_x$ are rainbow, for all vertices $x$. We then argue that we can find, for each vertex pair
$x,y,$ a partial mapping $g$ from the leaves of $T_x$ to the leaves of $T_y$ such that the path from $x$ to leaf
$v$ in $T_x$ and the path from $y$ to leaf $g(v)$ in $T_y$ do not share a color. This assumes that $v$ has an image under
the partial mapping $g$. We will have to argue that $g$ is defined on enough vertices in $T_x$. Given this,
we then consider the colors on a set of edge disjoint paths that we can construct from the leaves of $T_x$ to their 
$g$-counterpart in the leaves of $T_y$.

{\red We will use the configuration model of Bollob\'as \cite{b1} in our proofs, see \cite{JLR} 
or \cite{wormald} for details.}
Let $W=[2m=rn]$ be our set
of {\em configuration points} and let $W_i=[(i-1)r+1,ir]$,
$i\in [n]$, partition $W$. The function $\f:W\to[n]$ is defined by
$w\in W_{\f(w)}$. Given a
pairing $F$ (i.e. a partition of $W$ into $m$ pairs) we obtain a
(multi-)graph $G_F$ with vertex set $[n]$ and an edge $(\f(u),\f(v))$ for each
$\{u,v\}\in F$. Choosing a pairing $F$ uniformly at random from
among all possible pairings $\Omega_W$ of the points of $W$ produces a random
(multi-)graph $G_F$. 
{\red Each $r$-regular simple graph $G$ on vertex set $[n]$ is equally likely to be generated as $G_F$.
Here simple means without loops of multiple edges. 
Furthermore, if $r=O(1)$ then $G_F$ is simple with a probability bounded below by a positive value independent of $n$.
Therefore, any event that occurs \whp\ in $G_F$ will also occur \whp\ in $G(n,r)$.}
\subsection{Tree building}
We will grow a Breadth First Search tree $T_x$ from each vertex. We will grow each tree to depth 
$$k=k_r=\begin{cases}\rdup{\log_{r-2}\log n}&r\geq 4.\\\rdup{2\log_2\log n-2\log_2\log_2\log n}&r=3.\end{cases}$$
Observe that 
\beq{neq3}
T_x\text{ has at most }r(1+(r-1)+(r-1)^2+\cdots+(r-1)^{k-1})=r\frac{(r-1)^k-1}{r-1}\text{ edges.}
\eeq
It is useful to observe that
\begin{lemma}\label{density}
Whp, no set of $s\leq \ell_1=\frac{1}{10}\log_{r-1}n$ vertices contains more than $s$ edges.
\end{lemma}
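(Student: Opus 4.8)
The proof is a first-moment estimate in the configuration model $G_F$, of exactly the same flavour as the proof of Lemma~\ref{lem3}, combined with the standard fact that an event holding \whp\ in $G_F$ holds \whp\ in $G(n,r)$. I would proceed as follows.

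Fix $s$ with $1\le s\le \ell_1$ and a vertex set $S$ with $|S|=s$, and write $W_S=\bigcup_{v\in S}W_v$, a set of $rs$ configuration points. The number of edges of $G_F$ induced by $S$ equals the number of pairs of the random pairing $F$ that lie inside $W_S$, and if this is at least $s+1$ then $F$ contains some collection of $s+1$ pairwise disjoint pairs inside $W_S$. For $s+1$ prescribed disjoint pairs, the probability that all of them occur in $F$ equals $\prod_{i=1}^{s+1}(rn-2i+1)^{-1}\le (rn-2s-2)^{-(s+1)}\le (2/(rn))^{s+1}$, where we used $s\le\ell_1=o(n)$. Bounding the number of choices of $S$ by $\binom ns$ and the number of choices of the $s+1$ pairs by $\binom{\binom{rs}2}{s+1}$, a union bound gives
\[
\Pr\bigl(\exists\,S:\ |S|=s,\ e[S]\ge s+1\bigr)\ \le\ \binom ns\binom{\binom{rs}2}{s+1}\Bigl(\frac{2}{rn}\Bigr)^{s+1}.
\]

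Using $\binom ns\le(en/s)^s$, $\binom{rs}2\le r^2s^2/2$, and $\binom ab\le(ea/b)^b$, the right-hand side simplifies after routine algebra to at most $\frac{ers}{n}\,(e^2r)^s$. Summing over $1\le s\le\ell_1=\frac1{10}\log_{r-1}n$, the geometric series is dominated by a constant times its last term, so the total is at most $O\bigl(\ell_1\,n^{\,\eta-1}\bigr)$ with $\eta=\log(e^2r)/(10\log(r-1))$, which tends to $0$ provided $e^2r<(r-1)^{10}$. This inequality holds for all $r\ge3$: the binding case is $r=3$, where $3e^2\approx22.2<1024=2^{10}$, and for larger $r$ the left side grows linearly while the right side grows like $r^{10}$. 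Hence \whp\ $G_F$ contains no set $S$ with $|S|\le\ell_1$ and $e[S]>|S|$; since for fixed $r$ the random pairing yields a simple graph with probability bounded away from $0$, the same conclusion holds \whp\ for $G(n,r)$.

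The computation is entirely routine; the only point to watch is that the constant $\frac1{10}$ in the definition of $\ell_1$ is chosen small enough to make the exponent $\eta$ strictly less than $1$ for every $r\ge3$, the tightest constraint being $r=3$.
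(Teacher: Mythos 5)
Your proposal is correct and follows essentially the same route as the paper: a first-moment union bound over small vertex sets in the configuration model, ending with a bound of the form $\frac{r\ell_1}{n}\,\ell_1\,(e^2r)^{\ell_1}=o(1)$, together with the standard transfer from $G_F$ to $G(n,r)$. The only cosmetic difference is that you enumerate $s+1$ pairs of configuration points (so the factor $r^2$ appears in the count $\binom{\binom{rs}{2}}{s+1}$ and the per-pair probability is about $2/(rn)$), whereas the paper enumerates $s+1$ vertex pairs and puts the $r^2$ into the per-edge probability bound $\frac{r^2}{rn-rs}$; the two calculations are equivalent.
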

\begin{proof} 
Indeed,
\begin{align}
\Pr(\exists S\subseteq [n],|S|\leq \ell_1,e[S]\geq |S|+1)&\leq 
\sum_{s=3}^{\ell_1}\binom{n}{s}\binom{\binom{s}{2}}{s+1}\bfrac{r^2}{rn-rs}^{s+1}\label{neq1}\\
&\leq \frac{r\ell_1}{n}
\sum_{s=3}^{\ell_1}\binom{n}{s}\binom{\binom{s}{2}}{s}\bfrac{r^2}{rn-rs}^{s}\nonumber\\
&\leq \frac{r\ell_1}{n}\sum_{s=3}^{\ell_1}\brac{\frac{ne}{s}\cdot \frac{se}{2}\cdot\frac{2r}{n}}^s\nonumber\\
&\leq \frac{r\ell_1}{n}\cdot\ell_1\cdot (e^2r)^{\ell_1}=o(1).\label{neq2}
\end{align}
{\bf Explanation of \eqref{neq1}:} The factor $\bfrac{r^2}{rn-rs}^{s+1}$ can be justified as follows. We can estimate
$$\Pr(e_1,e_2,\ldots,e_{s+1}\in E(G_F))=\\\prod_{i=0}^{s}\Pr(e_{i+1}\in E(G_F)\mid e_1,e_2,\ldots,e_i\in E(G_F))
\leq \bfrac{r^2}{rn-rs}^{s+1}$$
if we pair up the lowest index endpoint of each $e_i$ in some arbitrary order. The fraction $\frac{r^2}{rn-rs}$ 
is an upper bound
on the probability that this endpoint is paired with the other endpoint, regardless of previous pairings.
\end{proof}

Denote the leaves of $T_x$ by $L_x$.
\begin{corollary}\label{nlem1}
Whp, {\red $(r-1)^k\leq |L_x|\leq r(r-1)^{k-1}$ for all $x\in [n]$.}
\end{corollary}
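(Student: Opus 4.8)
The plan is to prove the two bounds separately; the upper bound is deterministic and the lower bound reduces to Lemma~\ref{density}. Throughout write $B_k(x)$ for the ball of radius $k$ about $x$; since the BFS tree $T_x$ is a spanning tree of $B_k(x)$, the number of edges of $G$ inside $B_k(x)$ that are not edges of $T_x$ — call them \emph{excess edges} — equals $e[B_k(x)]-|B_k(x)|+1$. It suffices to prove the statement for $G=G(n,r)$, which is simple, as Lemma~\ref{density} holds \whp\ there by the transfer principle. The upper bound $|L_x|=D_k^{(x)}\le r(r-1)^{k-1}$ is then immediate: the root $x$ of $T_x$ has exactly $r$ children, and every other vertex of $T_x$ has at most $r-1$ children since one of its incident edges leads to its parent.

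For the lower bound I would first show that if $B_k(x)$ has at most one excess edge then $|L_x|\ge(r-1)^k$. If it has none then $B_k(x)=T_x$ is a tree in which every vertex at depth $<k$ other than $x$ has exactly $r-1$ children (all its neighbours lie in $B_k(x)$), so $|L_x|=r(r-1)^{k-1}\ge(r-1)^k$. If it has exactly one, with endpoints $u,v$, then no excess edge is incident with $x$ (BFS makes every neighbour of $x$ a child of $x$), so $\depth(u),\depth(v)\ge1$; the only change to $T_x$ is that $u$ and $v$ each have one fewer child, which removes at most $(r-1)^{k-1-\depth(u)}+(r-1)^{k-1-\depth(v)}\le 2(r-1)^{k-2}$ leaves. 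Since $r\ge3$ gives $2(r-1)^{k-2}\le(r-1)^{k-1}$, we get $|L_x|\ge r(r-1)^{k-1}-(r-1)^{k-1}=(r-1)^k$.

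It then remains to show that \whp\ no $B_k(x)$ has two or more excess edges. One cannot apply Lemma~\ref{density} to $B_k(x)$ directly, since $|B_k(x)|$ can be much larger than $\ell_1$; the point is that two excess edges force a \emph{small} violation of that lemma. Indeed, suppose $e_1\ne e_2$ are excess edges of $B_k(x)$. Each $e_i$ together with the path joining its endpoints in $T_x$ forms a cycle $C_i$ of length at most $2k+1$, with $C_1\ne C_2$ because the only excess edge on $C_i$ is $e_i$. Put $S=V(C_1)\cup V(C_2)$ if $C_1,C_2$ meet, and otherwise $S=V(C_1)\cup V(C_2)\cup V(Q)$, where $Q$ is a shortest path in $G$ from a vertex of $C_1$ to a vertex of $C_2$ (of length at most $2k$, through $x$). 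The subgraph of $G$ on $S$ formed by $C_1,C_2$ (and $Q$) is connected and its cycle space contains the two distinct nonzero vectors $C_1,C_2$, hence has dimension at least $2$, so this subgraph has at least $|S|+1$ edges; thus $e[S]\ge|S|+1$ while $|S|\le 3(2k+1)\le\ell_1$ for $n$ large, since $k=O(\log\log n)$. This contradicts the conclusion of Lemma~\ref{density}. Since that conclusion holds \whp, \whp\ every $B_k(x)$ has at most one excess edge, and combined with the previous step this gives $(r-1)^k\le|L_x|\le r(r-1)^{k-1}$ for all $x\in[n]$.

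The only slightly delicate point I anticipate is the leaf-count bookkeeping in the single-excess-edge case — verifying that the collapse costs at most $2(r-1)^{k-2}$ leaves and that $2(r-1)^{k-2}\le(r-1)^{k-1}$ for every $r\ge3$ (this is tight at $r=3$, which is why the bound $(r-1)^k$ is the best one can state uniformly); the remainder is a routine first-moment reduction to Lemma~\ref{density}.
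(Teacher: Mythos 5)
Your proof is correct and takes essentially the same route as the paper, whose two-sentence proof likewise deduces the bounds from the fact that, by Lemma~\ref{density}, the vertices of each $T_x$ span at most one cycle \whp. Your write-up simply fills in what the paper leaves implicit -- in particular the reduction of ``two excess edges in $B_k(x)$'' to a set of $O(k)$ vertices violating Lemma~\ref{density} (needed because $|V(T_x)|$ itself can exceed $\ell_1$), and the leaf-count bookkeeping in the one-cycle case.
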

\begin{proof}
This follows from the fact that \whp\ the vertices spanned by each $T_x$ span at most one cycle. 
This in turn follows from Lemma \ref{density}.
\end{proof}

\noindent Consider two vertices $x,y \in V(G)$ where $T_x\cap T_y=\emptyset$. We will show that \whp\ we can find 
a subgraph $G'(V',E'), V' \subseteq V, E'\subseteq E$ with 
similar structure to that shown in Figure~\ref{fig:fig2}. Here $k=k_r$ and $\gamma=\brac{\frac12+\e}\log_{r-1}n$
for some small positive constant $\e$. 
\begin{remark}\label{rem3}
In our analysis we expose the pairing $F$, only as necessary. For example
the construction of $T_x$ involves exposing all pairings involving non-leaves of $T_x$ and one pairing for each leaf.
There can be at most one exception to this statement, for the rare case where $T_x$ contains a unique cycle.
In particular, if we expose the point $q$ paired with a currently unpaired point $p$ of a leaf of $T_x$ then $q$ is chosen
randomly from the remaining unpaired points.
\end{remark}
Suppose that we have 
constructed $i=O(\log n)$ {\red vertex disjoint trees of depth $\g$ rooted at some of the leaves 
of $T_x$}. We grow the $(i+1)$st tree $\hT_z$ via BFS, without using edges that go into $y$ or previously 
constructed trees. 
Let a leaf $z\in L_x$ be {\em bad} if we have to omit a single edge as we construct the first $\ell_1/2$ levels 
of $\hT_z$.
The previously constructed
trees plus $y$ account for $O(n^{1/2+\e})$ vertices and pairings, so the probability that $z$ is bad, given all
the pairings we have exposed so far, is at most 
$O((r-1)^{\ell_1/2}n^{-1/2+\e})=O(n^{-1/3})$. Here bad edges can only join two leaves. This probability bound holds
regardless of whichever other vertices are bad. This follows from the way we build the pairing $F$, see the final
statement of Remark \ref{rem3}. So \whp\ there will be at most 3 bad leaves on any 
$T_x$. Indeed, $\Pr(\exists x:x\text{ has }\geq 4
\text{ bad leaves})\leq n\binom{O(\log n)}{4}n^{-4/3}=o(1)$. 

If a leaf is not bad then the first $\ell_1/2$ 
levels produce $\Theta(n^{1/20})$ leaves. 
From this, we see that \whp\ the next $\g-\ell_1$ levels grow at a rate $r-1-o(n^{-1/25})$. Indeed, given that a 
level has $L$ vertices where 
$n^{1/20}\leq L\leq n^{3/4}$, the number of vertices in the next level dominates 
$Bin\brac{(r-1)L,1-O\bfrac{n^{3/4}}{n}}$,
after accounting for the configuration points used in building previous trees. 
Indeed, $(r-1)L$ configuration points associated with good leaves will be unpaired and for each of them,
the probability it is paired with a point associated with a vertex in any of the trees constructed so far is
$O(n^{1/2+2\e}/n)$. This probability bound holds regardless of the pairings of the other leaf configuration points.  
We can thus assert that \whp\ we will have that all but at most three of the leaves $L_x$ of
$T_x$ are roots of vertex disjoint trees $\hT_1,\hT_2,\ldots,$ 
each with $\Theta(n^{1/2+\e/2})$ leaves. Let $L_x^*$ denote these {\em good} leaves.
The same analysis applies when we build trees $\hT_1',\hT_2',\ldots,$ with roots at $L_y$. 

Now the probability that there is no edge joining the leaves of $\hT_i$ to the leaves of $\hT_j'$ is at most
$$\brac{1-\frac{(r-1)\Theta(n^{1/2+\e/2})}{rn}}^{(r-1)
n^{1/2+\e/2}}\leq e^{-\Omega(n^{\e})}.$$ 
To summarise,
\begin{remark}\label{rem2}
{\em Whp} we will succeed in finding in $G_F$ and hence in $G=G(n,r)$, for all $x,y\in V(G_F)$,
for all $u\in L_x^*,v\in L_y^*$, a path $P_{u,v}$ from $u$ to $v$ of length
$O(\log n)$ such that if $u\neq u'$ and $v\neq v'$ then
$P_{u,v}$ and $P_{u',v'}$ are edge disjoint. These paths avoid $T_x,T_y$ except at their start and endpoints.
\end{remark}
\subsection{Coloring the edges}
We now consider the problem of coloring the edges of $G$. Let $H$ denote the line graph of 
$G$ and let $\G=H^{2k}$ denote the 
graph with the same vertex set as $H$ and an edge between
vertices $e,f$ of $\G$ if there there is a path of length at most $k$ between $e$ and $f$ in $H$. We will
construct a proper coloring
of $\G$ using 
$$q=10(r-1)^{2k}\sim100\log^{2\th_r}n\text{ where }\th_r=\frac{\log (r-1)}{\log (r-2)}$$ 
colors. We do this as follows: Let $e_1,e_2,\ldots,e_m$ be an 
arbitrary ordering of the vertices of $\G$. For $i=1,2,\ldots,m$, color $e_i$ with a random color, chosen 
uniformly from the set of colors not currrently
appearing on any neighbor in $\G$. At this point only $e_1,e_2,\ldots,e_{i-1}$ will have been colored.

Suppose then that we color the edges of $G$ using the above method. Fix a pair of vertices $x,y$ of $G$.
We see immediately, that no color appears twice in $T_x$ and no color appears twice in $T_y$. 
This is because the distance between edges in $T_x$ is at most $2k$. 
This also deals with the case where $V(T_x)\cap V(T_y)\neq\emptyset$, for the same reason. So assume now that 
$T_x,T_y$ are vertex disjoint.
We can find lots of paths joining $x$ and $y$. We know that the first and last $k$ edges of each
path will be individually rainbow colored. We will first show that we have many choices of path where these $2k$ edges 
are rainbow colored
when taken together. 

\subsection{Case 1: $r\geq 4$:}
We argue now that we can  
find $\s_0=(r-2)^{k-1}$ leaves 
$u_1,u_2,\ldots,u_\t\in T_x$ and $\s_0$ leaves $v_1,v_2,\ldots,v_\t\in T_y$
such for each $i$ the $T_x$ path from $x$ to $u_i$ and the $T_y$ path from $y$ to $v_i$ do not share any colors. 
\begin{lemma}\label{lemcol}
Let $T_1,T_2$ be two vertex disjoint copies of an edge colored complete $d$-ary tree with $\ell$ levels, where 
$d\geq 3$. Let $T_1,T_2$ be rooted at $x,y$ respectively.
Suppose that the colorings
of $T_1,T_2$ are both rainbow. Let $\k=(d-1)^{\ell}$. Then there exist leaves $u_1,u_2,\ldots,u_\k$ of $T_1$ and 
leaves 
$v_1,v_2,\ldots v_\k$ of 
$T_2$ such that the following is true: If $P_i,P_i'$ are the paths from $x$ to $u_i$ in $T_1$ and from $y$ to $v_i$ in 
$T_2$ respectively, then 
$P_i\cup P_i'$ is rainbow colored for $i=1,2,\ldots,\k$.
\end{lemma}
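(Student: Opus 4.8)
The plan is to prove the lemma by induction on the number of levels $\ell$: each inductive step reduces to a single bipartite matching problem at the roots, followed by a recursion into the matched branches.

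For $\ell=0$ the trees are single vertices and the pair $(x,y)$ works vacuously (one could equally take $\ell=1$ as the base, where the relevant bipartite graph below is $K_{d,d}$ minus a partial matching and has a matching of size $d$). For the inductive step let $A_1,\dots,A_d$ be the branches of $T_1$ at $x$, i.e.\ $A_i$ is the subtree rooted at the $i$-th child $x_i$ together with the edge $(x,x_i)$, so that the $A_i$ partition the edges of $T_1$; let $\gamma_i$ be the color of $(x,x_i)$, and write $\mathrm{col}(S)$ for the set of colors of a subgraph $S$. Define $B_1,\dots,B_d$, $y_j$, $\delta_j$ analogously in $T_2$. First I would find a matching of size $d-1$ in the bipartite graph $\mathcal H$ on $\{x_1,\dots,x_d\}$ versus $\{y_1,\dots,y_d\}$ with $x_i\sim y_j$ iff $\gamma_i\notin\mathrm{col}(B_j)$ and $\delta_j\notin\mathrm{col}(A_i)$ (equivalently: $\gamma_i\neq\delta_j$, $\gamma_i\notin\mathrm{col}(T_2^{(j)})$ and $\delta_j\notin\mathrm{col}(T_1^{(i)})$, where $T_1^{(i)},T_2^{(j)}$ denote the subtrees without the root edges). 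Given such a matching $\{(x_i,y_{\sigma(i)}):i\in I\}$ with $|I|=d-1$, each $T_1^{(i)}$ and $T_2^{(\sigma(i))}$ is again a rainbow complete $d$-ary tree with $\ell-1$ levels, so the inductive hypothesis yields $(d-1)^{\ell-1}$ leaf pairs of $(T_1^{(i)},T_2^{(\sigma(i))})$ whose root-to-leaf paths inside the subtrees are jointly rainbow. Prepending the edges $(x,x_i)$ and $(y,y_{\sigma(i)})$ preserves this: $\gamma_i\neq\delta_{\sigma(i)}$; each new color avoids its own subtree since $T_1,T_2$ are rainbow; and each avoids the other subtree's path because $\gamma_i\notin\mathrm{col}(T_2^{(\sigma(i))})$ and $\delta_{\sigma(i)}\notin\mathrm{col}(T_1^{(i)})$. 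Taking the union over $i\in I$ gives $(d-1)\cdot(d-1)^{\ell-1}=(d-1)^{\ell}$ pairs; the $T_1$-leaves are pairwise distinct because they lie in distinct $T_1^{(i)}$ (and are distinct within each by induction), and likewise for the $T_2$-leaves.

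Everything thus reduces to the claim that $\mathcal H$ has a matching of size $d-1$, and I expect this to be the main obstacle. I would prove it with K\"onig's theorem. If the claim fails, $\mathcal H$ has a vertex cover $K$ with $|K|\le d-2$, so the uncovered left and right sets $L^{*},R^{*}$ satisfy $|L^{*}|,|R^{*}|\ge 2$ and span no edge; hence for every $x_i\in L^{*}$, $y_j\in R^{*}$ at least one of $\gamma_i\in\mathrm{col}(B_j)$, $\delta_j\in\mathrm{col}(A_i)$ holds. Call such a pair an $L$-pair in the first case and an $R$-pair otherwise. Since $T_2$ is rainbow the color $\gamma_i$ appears on at most one edge of $T_2$, hence lies in at most one of the edge-disjoint $B_j$; so each $x_i\in L^{*}$ is the left endpoint of at most one $L$-pair, and symmetrically each $y_j\in R^{*}$ is the right endpoint of at most one $R$-pair. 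Therefore $|L^{*}|\,|R^{*}|\le|L^{*}|+|R^{*}|$, i.e.\ $(|L^{*}|-1)(|R^{*}|-1)\le 1$, which forces $|L^{*}|=|R^{*}|=2$. But then $K$ must contain all $d-2$ covered left vertices and all $d-2$ covered right vertices, so $|K|\ge 2(d-2)>d-2$ because $d\ge 3$ --- a contradiction. (This is the only place $d\ge 3$ is used.)

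A few points deserve care in the write-up: the equivalence of the two forms of the adjacency in $\mathcal H$ --- the bundled form $\gamma_i\notin\mathrm{col}(B_j)$ is exactly what makes the counting clean, because a single color meets at most one branch $B_j$, whereas without bundling a color could a priori coincide with $\delta_j$ for one index and appear in $\mathrm{col}(T_2^{(j')})$ for another; and the bookkeeping that the inductive hypothesis applied to $(T_1^{(i)},T_2^{(\sigma(i))})$ really returns paths contained in those subtrees, so that the splicing genuinely produces root-to-leaf paths of $T_1$ and $T_2$ from $x$ and $y$.
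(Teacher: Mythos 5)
Your proposal is correct and follows essentially the same route as the paper: the same induction on the number of levels, the same bipartite graph on the root branches (adjacency $\gamma_i\notin\mathrm{col}(B_j)$, $\delta_j\notin\mathrm{col}(A_i)$), a matching of size $d-1$, and the same splicing of root edges onto the inductively obtained rainbow path pairs. The only difference is cosmetic: you establish the matching via K\"onig's theorem and a count of non-edges in the uncovered block, whereas the paper uses a defect-Hall edge-counting inequality; both hinge on the same observation that, by rainbowness, each root-edge color meets at most one branch of the opposite tree.
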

\begin{proof}
Let $A_\ell$ be the minimum number of rainbow path pairs that we can find in any such pair of edge colored trees.
We prove that $A_\ell\geq (d-1)^\ell$ by induction on $\ell$. This is true trivially for $\ell=0$. 
Suppose that $x$ is incident with $x_1,x_2,\ldots,x_d$ and that the sub-tree rooted at $x_i$ is $T_{1,i}$ for 
$i=1,2,\ldots,d$.
Define $y_i$ and $T_{2,i},\,i=1,2,\ldots,d$ similarly with respect to $y$. Suppose that the color of the 
edge $(x,x_i)$ is 
$c_i$ for $i=1,2,\ldots,d$
and let $Q_x=\set{c_1,c_2,\ldots,c_d}$. 
Similarly, suppose that the color of the edge $(y,y_i)$ is 
$c_i'$ for $i=1,2,\ldots,d$
and let $Q_y=\set{c_1',c_2',\ldots,c_d'}$. 
Next suppose that $Q_j$ is the set of colors in $Q_x$ that appear on the 
edges $E(T_{2,j})\cup \{(y,y_j)\}$ .
The sets $Q_1,Q_2,\ldots,Q_d$ are pair-wise disjoint. 
Similarly, suppose that $Q_i'$ is the set of colors in $Q_y$ that appear on the 
edges $E(T_{1,i})\cup \{(x,x_i)\}$.
The sets $Q_1',Q_2',\ldots,Q_d'$ are pair-wise disjoint. 

Now define a bipartite graph $H$ with vertex set $A+B=[d]+[d]$ and an edge $(i,j)$ iff $c_i\notin Q_j$
and $c_j'\notin Q_i'$. We claim that if $S\subseteq A$ then its neighbor set $N_H(S)$ satisfies the inequality
\beq{HM}
d|S|-|N_H(S)|-|S|\leq |S|\cdot |N_H(S)|.
\eeq
Here the LHS of \eqref{HM} bounds from below, the size of the set $S:N_H(S)$ of edges between $S$ and $N_H(S)$. 
This is because there are at most 
$|S|$ edges missing from $S:N_H(S)$ due to $i\in S$ and $j\in N_H(S)$ and $c_i\in Q_j$. 
At most $|N_H(S)|$ edges are missing
for similar reasons. On the other hand, $d|S|$ is the number there would be without these missing edges. The RHS
of \eqref{HM} is a trivial upper bound.

Re-arranging we get that 
$$|N_H(S)|-|S|\geq \rdup{\frac{(d-2-|S|)|S|}{|S|+1}}\geq -1.$$
(We get -1 when $|S|=d$).

Thus $H$ contains a matching $M$ of size $d-1$. Suppose without loss of generality that this matching is 
$(i,i),i=1,2,\ldots,d-1$.
We know by induction that for each $i$ we can find paths $(P_{i,j},\hP_{i,j}),\,j=1,2,\ldots,(d-1)^{\ell-1}$ 
where $P_{i,j}$ is a root to leaf path 
in $T_{1,i}$ and $\hP_{i,j}$ is a root to leaf path 
in $T_{2,i}$  and that $P_{i,j}\cup \hP_{i,j}$ is rainbow for all $i,j$. Furthermore, $(i,i)$ being an edge of $H$, 
means that the edge sets $\set{(x,x_i)}\cup E(P_{i,j})\cup E(\hP_{i,j})\cup \{(y,y_i\}$ are all rainbow. 
\end{proof}

Let 
$$V_1=\set{x:V(T_x)\text{ contains a cycle}}.$$ 
When $x,y\notin V_1$ we apply this Lemma to $T_x,T_y$ by deleting one of the $r$ sub-trees attached to each of $x,y$
and applying the lemma directly to the $(r-1)$-ary trees that remain. This will yield $(r-2)^k$ pairs
of paths. 
If $x\in V_1$, we delete $r-2$ sub-trees attached to $x$ leaving at least two $(r-1)$-ary trees of depth $k-1$
with roots adjacent to $x$. We can do the same at $y$. Let $c_1,c_2$ be the colors of the two edges 
from $x$ to the roots of these two trees $T_1,T_2$. Similarly, let $c_1',c_2'$ be the colors of the two analogous 
edges from $y$ to the trees $T_1',T_2'$. If color $c_1$ does not appear in $T_1'$ then we apply
the lemma to $T_1$ and $T_1'$. Otherwise, we can apply the lemma to $T_1$ and $T_2'$. In both cases we 
obtain  $(r-2)^{k-1}$ pairs
of paths.  

Accounting for bad vertices we put 
$$\s=\s_0-6=(r-2)^{k-1}-6\geq \frac{\log n}{r-2}-6$$ 
and we see {\red from Remark \ref{rem2} 
that we can \whp\ find $\s$ paths $P_1,P_2,\ldots,P_\s$ 
of length $O(\log n)$ from $x$ to $y$. Path $P_i$ goes from $x$ to a leaf $u_i\in L_x^*$ via $T_x$ and then
traverses $Q_i=P(u_i,v_i)$ where {\rred $v_i=\f(u_i)\in L_y^*$} and then goes from $v_i$ to a $y$ via $T_y$. 
Here $\f$ is some partial map from $L_x^*$ to $L_y^*$. It is a random variable that depends on the coloring
$\cC$ of the edges of $T_x$ and $T_y$.
The paths $P_1,P_2,\ldots,P_\s$ depend 
on the choice of $\f$ and hence $\cC$ and so we should write $P_i=P_i(\cC)$.

We fix the coloring $\cC$ and hence $P_1,P_2,\ldots,P_\s$. Let $\cR$ be the event that at least one of the paths
$P_1,P_2,\ldots, P_\s$ is rainbow colored. 
We show that $\Pr(\neg\cR\mid\cC)$ is small.

We let $c(e)$ denote the color of edge $e$ in a given coloring. 
We remark next that for a particular coloring $c_1,c_2,\ldots,c_m$ of the edges $e_1,e_2,\ldots,e_m$ we have
$$\Pr(c(e_i)=c_i,\,i=1,2,\ldots,m)=\prod_{i=1}^m\frac{1}{a_i}$$
where $q-\D\leq a_i\leq q$ is the number of colors available for the color of the edge $e_i$ 
given the coloring so far i.e. the number of colors
unused by the neighbors of $e_i$ in $\G$ when it is about to be colored.

Now fix an edge $e=e_i$ and the colors $c_j,\,j\neq i$. Let $C$ be the set of colors not used by the neighbors of $e_i$ in $\G$.
The choice by $e_i$ of its color under this conditioning is not quite random, but close. Indeed, we claim that for $c,c'\in C$
$$\frac{\Pr(c(e)=c\mid c(e_j)=c_j,\,j\neq i)}{\Pr(c(e)=c'\mid c(e_j)=c_j,\,j\neq i)}\leq \bfrac{q-\D}{q-\D-1}^\D.$$
This is because, changing the color of $e_i$ only affects the number of colors available to neighbors of $e_i$, and only by at most one.

Thus, for $c\in C$, we have
$$\Pr(c(e)=c\mid c(e_j)=c_j,\,j\neq i)\leq \frac{1}{q-\D}\bfrac{q-\D}{q-\D-1}^\D.$$
Now $\D\leq (r-1)^{2k}=q/10$ and we deduce that 
$$\Pr(c(e)=c\mid c(e_j)=c_j,\,j\neq i)\leq \frac{2}{q}.$$
It follows that for $i\in[\s]$,
$$\Pr(P_i\text{ is rainbow colored}\mid \cC,\text{ coloring of }\bigcup_{j\neq i}Q_j)\geq 
\brac{1-\frac{4(k+\g)}{q}}^{2\g}.$$
This is because when we consider the coloring of $Q_i$ there will always be at most $2k+2\g$ colors forbidden by 
non-neighboring edges, if it is to be rainbow colored.

It then follows that
\begin{align*}
\Pr(\neg\cR\mid \cC)&\leq \brac{1-\brac{1-\frac{4(k+\g)}{q}}^{2\g}}^{\s}\\
&\leq \bfrac{8\g(k+\g)}{q}^{\s}\\
&\leq \bfrac{(2+10\e)\log_{r-1}^2n}{10\log^{\th_r}n}^{\s}=o(n^{-2}).
\end{align*}
This completes the proof of Theorem \ref{thrm:regular} when $r\geq 4$.

\noindent
{\bf Case 2: $r=3$:}\\
When $r=3$ we can't use $(r-2)^k$ to any effect. Also, we need to increase $q$ to $\log^4n$.
This necessary for a variety of reasons. One reason is that we will reduce $\s$ to $2^{k/2}$.
We want this to be $\Omega(\log n)$ and 
this will force $k$ to (roughly) double what it would have been if we had followed the recipe for $r\geq 4$.
This makes $\D$ close to $\log^4n$ and we need $q\gg\D$. 

And we need to modify the 
argument based on Lemma \ref{lemcol}. Instead of inducting on the trees at depth one from the roots $x,y$, 
we now 
induct on the trees at depth two. Assume first that $x,y\notin V_1$.
After ignoring one branch for $T_x$ and $T_y$ we now consider the sub-trees $T_{x,i},T_{y,i},\,i=1,2,3,4$ of 
$T_x,T_y$ whose
roots $x_1,\ldots,x_4$ and $y_1,\ldots,y_4$ are at depth two. We cannot necessarily make this construction when 
$x\in V_1$.
Let $P_i$ be the path from $x$ to $x_i$ in $T_x$ and let $\hP_j$ be the path from $y$ to $y_j$ in $T_y$.
Next suppose that $\widehat{Q}_j$ is the set of colors in $Q$ that appear on the 
edges $E(T_{y,j})\cup E(\hP_j)$.
Similarly, suppose that $Q_i'$ is the set of colors in $Q'$ that appear on the 
edges $\{E(T_{x,i})\cup E(P_i)\}$. 

Re-define $H$ to be the bipartite graph with vertex set $A+B=[4]+[4]$. The edges of $H$ are as before:
$(i,j)$ exists iff $c_i\notin Q_j$ and $c_j'\notin \widehat{Q}_i$. This time we can only say that a color is in at 
most 
two $\widehat{Q}_i$'s 
and
similarly for the $Q_j'$'s.
The effect of this is to replace \eqref{HM} by
$$4|S|-2(|N_H(S)|+|S|)\leq |S|\cdot |N_H(S)|$$
from which we can deduce that
$$|S|-|N_H(S)|\leq \frac{|S|\cdot |N_H(S)|}{2}\leq 2|N_H(S)|.$$
It follows that $|N_H(S)|\geq \rdup{|S|/3}\geq |S|-2$ and so $H$ contains a matching of size two. 
An inductive argument then shows that we are able to find $2^{\rdown{k/2}}$ 
rainbow pairs of paths. The proof now continues as in the case $r\geq 4$, arguing about the coloring of
paths $P_1,P_2,\ldots,P_\s$ where now $\s=2^{\rdown{k/2}}$. 

We finally deal with the vertices in $V_1$. 
We classify them according to the size of the cycle $C_x$ that is contained in $V(T_x)$.
If $T_x$ contains a cycle $C_x$ then necessarily 
$|C_x|\leq 2k$ and so there are at most $2k$ types in our classification. 
It follows from Lemma \ref{density} that if $x,y\in V_1$ and $T_x\cap T_y\neq\emptyset$
then $C_x=C_y$ \whp. Note next that the distance from $x$ to $C_x$ is at most $k-|C_x|/2$. If $C$ is a cycle
of length at most $2k$, let $V_C=\set{x:C=C_x}$ and let $E_C$ be the set of edges contained in $V_C$. We have
\beq{VK}
|V_C|=O( |C|2^{k-|C|/2})=O(2^k)=O(\log^2n/\log\log n).
\eeq
We introduce $2k$ new sets $\hC_i,i=3,4,\ldots,2k$ of
$O(\log^2n/\log\log n)$ colors, distinct from $Q$. Thus we introduce $O(\log^2n)$ new colors overall.
We re-color each $E_C$ with the colors from $\hC_{|C|}$. It is important to observe that if $|C|=|C'|$ then the graphs 
induced by
$V_C$ and $V_{C'}$ are isomorphic and so we can color them isomorphically. By the latter we mean that we choose some 
isomorphism $f$ from $V_C$ to $V_{C'}$ and then if $e$ is an edge
of $V_C$ then we color $e$ and $f(e)$ with the same color. After this re-coloring, we see that if $T_x$ and $T_y$ are 
not vertex disjoint,
then they are contained in the same $V_C$. The edges of $V_C$ are rainbow colored and so now we only need to concern 
ourselves with
$x,y\in V_1$ such that $T_x$ and $T_y$ are vertex disjoint. Assume now that $x,y\in V_1$.

Assume first that $x,y$ are of the same type and that they are at the same distance from $C_x,C_y$ respectively.
Our aim now is to define binary trees $T_x',T_y'$ ``contained`` 
in $T_x,T_y$ that can be used as in Lemma \ref{lemcol}. 
If we delete an edge $e=(u,v)$ of $C_x$ then the graph that remains on $V(T_x)$
is a tree with at most two vertices $u,v$ of degree two. Now delete one of the three sub-trees of $T_x$.
If there are vertices of degree two, make sure one of them is in this sub-tree. 
If necessary, shrink the path of length two with the remaining vertex of degree two in the middle to an edge $e_x$. 
It has leaves at depth $k-1$ and leaves at depth $k-2$. 
The resulting binary tree will be our $T_x'$. The leaves at depth $k-1$ come in pairs. Delete one vertex from each 
pair
and shrink the paths of length two through the vertex at depth $k-2$ to an edge.

The edges that are obtained by shrinking paths of length two will have two colors. 
Because $x,y$ are at the same distance from their cycles, we can delete $f(e)$ from $C_y$ and do the construction so 
that
$T_x'$ and $T_y'$ will be isomorphically colored.

It is now easy to find $2^{k-2}$ pairs of paths whose unions are rainbow colored. Each leaf of $T_x,T_y$
can be labelled by a $\{0,1\}$ string of length $k-2$. We pair string $\xi_1\xi_2\cdots\xi_{k-1}\xi_{k-2}$ in $T_x$
with $(1-\xi_1)\xi_2\cdots\xi_{k-1}\xi_{k-2}$ in $T_y$. The associated paths will have a rainbow union.
The proof now continues as in the case $r\geq 4$, arguing about the coloring of
paths $P_1,P_2,\ldots,P_\s$ where now $\s=2^{k-2}$.

If $x$ is further from $C_x$ than $y$ is from $C_y$ then let $z$ be the vertex on the path from $x$ to $C_x$ at the
same distance from $C_x$ as $y$ is from $C_y$. We have a rainbow path from $z$ to $y$ and adding the $T_x$ path 
from $x$ to $z$
gives us a rainbow path from $x$ to $y$. This relies on the fact that $V_{C_x}$ and $V_{C_y}$ are isomorphically
colored.

If $x,y$ are of a different type, then $T_x$ and $T_y$ are re-colored with distinct colors and we can proceed as 
as in the case $r\geq 4$, arguing about the coloring of
paths $P_1,P_2,\ldots,P_\s$ where now $\s=2^{k}$, using Corollary \ref{nlem1}.

If $x\in V_1$ and $y\notin V_1$ then we can proceed as if 
both are not in $V_1$.
This is because of the re-coloring of the edges of $T_x$. We can proceed as 
as in the case $r\geq 4$, arguing about the coloring of
paths $P_1,P_2,\ldots,P_\s$ where now $\s=2^{k}$, using Corollary \ref{nlem1}.

This completes our proof of 
Theorem \ref{thrm:regular}.
\qed
\section{Conclusion} 
\label{sec:concl} 

\noindent In this work we have given an aymptotically tight result on 
the rainbow connectivity of $G=G(n,p)$ at the connectivity
threshold. It is reasonable to conjecture that this could be tightened:

{\bf Conjecture:} {\em Whp}, $rc(G) = \max\set{Z_1,diameter(G(n,p))}$.

\noindent Our result on random regular graphs is not so tight. 
It is still reasonable to believe that the above conjecture also holds in
this case. (Of course $Z_1=0$ here).

\noindent It is worth mentioning that if the degree $r$ in Theorem \ref{thrm:regular} is allowed to grow as fast 
as $\log n$ then
one can prove a result closer to that of Theorem \ref{thrm:mainthrm}.

\end{document}